\theoremstyle{plain} {
%\swapnumbers
  \newtheorem{thm}{Theorem}[section]
  \newtheorem{defn}[thm]{Definition}
  \newtheorem{cor}[thm]{Corollary}
  \newtheorem{lem}[thm]{Lemma}

  \theoremstyle{definition}
  \newtheorem{rem}[thm]{Remark}
    
  \theoremstyle{plain}

}
\renewcommand{\subsubsection}{\sssection\rm}
\newcommand{\bE}{\mathbf E}
\newcommand{\bG}{\mathbf G}
\newcommand{\bH}{\mathbf H}
\newcommand{\bT}{\mathbf T}
\newcommand{\bC}{\mathbf C}
\newcommand{\bM}{\mathbf M}
\newcommand{\cH}{\mathcal H}
\newcommand{\cO}{\mathcal O}
\DeclareMathOperator{\Iso}{Iso}
\DeclareMathOperator{\spec}{Spec}
\newcommand{\can}{\text{\rm can}}
\newcommand{\pr}{\text{\rm pr}}
\newcommand{\Spec}{\text{\rm Spec}}
\newcommand{\Aut}{\text{\rm Aut}}
\newcommand{\Aff}{\mathbf {A}}
\newcommand \xra {\xrightarrow }
\newcommand \hra {\hookrightarrow }
\renewcommand \phi\varphi
\begin{document}

\title{A short exact sequence
}

\author{Ivan Panin\footnote{The author acknowledges support of the
RFBR grant No. 19-01-00513.}
}

\maketitle

\begin{abstract}
Let $R$ be a regular semi-local integral domain containing a field and $K$ be its fraction field.
Let $\mu: \bG \to \bT$
be an $R$-group schemes morphism between reductive $R$-group schemes, which is smooth as a scheme morphism.
Suppose that $T$ is an $R$-torus.
Then the map
$\bT(R)/\mu(\bG(R)) \to \bT(K)/\mu(\bG(K))$
is injective and certain purity theorem is true.
These and other results are derived from
an extended form of Grothendieck--Serre conjecture proven in the present paper for rings
$R$ as above.
\end{abstract}

\section{Main results}\label{Introduction}
Let $R$ be a commutative unital ring. Recall that an $R$-group scheme $\bG$ is called reductive, (respectively, semi-simple or simple),
if it is affine and smooth as an $R$-scheme and if, moreover,
for each algebraically closed field $\Omega$ and for each ring homomorphism $R\to\Omega$ the scalar extension $\bG_\Omega$ is
a connected reductive (respectively, semi-simple or simple)
algebraic group over $\Omega$.
The class of reductive group schemes contains
the class of semi-simple group schemes which in turn contains the class of simple group schemes.
This notion of a reductive $R$-group scheme
coincides with~\cite[Exp.~XIX, Definition~2.7]{SGA3}.
This notion of a simple $R$-group scheme coincides with the notion of a simple semi-simple $R$-group
scheme from Demazure and Grothendieck \cite[Exp. XIX, Definition 2.7 and Exp. XXIV, 5.3]{SGA3}.
Here is our first main result based on results of \cite{Pan2} and \cite{Pan3} and significantly extending the corresponding results of
\cite{Pan2} and \cite{Pan3}.
\begin{thm}
\label{Aus_Buksbaum_3}
Let $R$ be a regular semi-local integral domain containing a field. Let $K$ be the fraction field of $R$.
Let
$\mu: \bG \to \bT$
be an $R$-group scheme morphism between reductive $R$-group schemes, which is smooth as a scheme morphism.
Suppose $T$ is an $R$-torus.
Then the map
$\bT(R)/\mu(\bG(R)) \to \bT(K)/\mu(\bG(K))$
is injective and the sequence
\begin{equation}
\label{Aus_Buks_sequence_3}
\{1\} \to \bT(R)/\mu(\bG(R)) \to
\bT(K)/\mu(\bG(K)) \xrightarrow{\sum r_{\mathfrak p}} \bigoplus_{\mathfrak p}
\bT(K)/[\bT(R_{\mathfrak p})\cdot \mu(\bG(K))] \to \{1\}
\end{equation}
is exact, where $\mathfrak p$ runs over all height one prime ideals of $R$
and each
$r_{\mathfrak p}$ is the natural map (the projection to the factor group).
\end{thm}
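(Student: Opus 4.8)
The plan is to translate the statement into étale cohomology and feed it into the extended Grothendieck--Serre theorem established in the present paper. First I would set $\bH=\ker(\mu)$. Since $\mu$ is smooth as a scheme morphism, $\bH$ is a smooth affine $R$-group scheme, and since $\bT$ has connected fibres while $\mu$ is open (being smooth) with closed image on each fibre over $\Spec R$, the morphism $\mu$ is surjective; hence $1\to\bH\to\bG\xrightarrow{\mu}\bT\to1$ is a short exact sequence of smooth affine $R$-group schemes, exact already as a sequence of étale sheaves (cf. \cite{SGA3}). Because $\bT$ is commutative, $\bH$ contains the derived group $\bG^{\mathrm{der}}$ and $\bH/\bG^{\mathrm{der}}$ embeds into the torus $\bG/\bG^{\mathrm{der}}$; consequently $\bH^0$ is reductive and $\bH/\bH^0$ is finite étale, so $\bH$ lies in the class of $R$-group schemes for which the extended Grothendieck--Serre theorem of this paper is available.

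Next I would pass to cohomology. For every $R$-algebra $A$ the exact sequence of pointed sets
\[
1\to\bH(A)\to\bG(A)\to\bT(A)\xrightarrow{\ \partial_A\ }H^1_{\et}(A,\bH)\to H^1_{\et}(A,\bG)
\]
shows that $\partial_A$ induces a bijection $\bT(A)/\mu(\bG(A))\xrightarrow{\ \sim\ }\ker\big[H^1_{\et}(A,\bH)\to H^1_{\et}(A,\bG)\big]$, natural in $A$; I apply this with $A=R,\ K$ and $A=R_{\mathfrak p}$. The first assertion of the theorem then reads: a class in $H^1_{\et}(R,\bH)$ that becomes trivial over $K$ is trivial — precisely the injectivity part of the extended Grothendieck--Serre theorem applied to $\bH$.

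For the exactness of \eqref{Aus_Buks_sequence_3} I would argue term by term. On the left this is the injectivity just obtained, and the composite of two consecutive maps is visibly trivial. For exactness in the middle, take $t\in\bT(K)$ with $r_{\mathfrak p}(\bar t)=1$ for all height one $\mathfrak p$, i.e. $t\in\bT(R_{\mathfrak p})\cdot\mu(\bG(K))$ for every such $\mathfrak p$; then $\partial_K(t)$ lies in the image of $H^1_{\et}(R_{\mathfrak p},\bH)\to H^1_{\et}(K,\bH)$ for all $\mathfrak p$, so the purity part of the extended Grothendieck--Serre theorem for $\bH$ yields $\eta\in H^1_{\et}(R,\bH)$ with $\eta|_K=\partial_K(t)$; the image of $\eta$ in $H^1_{\et}(R,\bG)$ is trivial over $K$, hence trivial by Grothendieck--Serre for the reductive group $\bG$, so $\eta=\partial_R(s)$ for some $s\in\bT(R)$ and $s^{-1}t\in\ker\partial_K=\mu(\bG(K))$, placing $\bar t$ in the image of $\bT(R)/\mu(\bG(R))$. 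Finally, surjectivity on the right means realising an arbitrary finitely supported family; via the surjections $\bT(K)/\bT(R_{\mathfrak p})\twoheadrightarrow\bT(K)/[\bT(R_{\mathfrak p})\cdot\mu(\bG(K))]$ this reduces to surjectivity of the divisor map $\bT(K)\to\bigoplus_{\mathfrak p}\bT(K)/\bT(R_{\mathfrak p})$ for the torus $\bT$, which holds because all semi-local rings involved have trivial Picard group (reduce to induced tori via restriction of scalars, where it is the classical $\operatorname{Cl}=0$), and is in any case absorbed into the purity statement if that is phrased as exactness of the Cousin complex.

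The genuine obstacle, as the abstract signals, is the extended Grothendieck--Serre theorem itself: injectivity and purity for $H^1_{\et}(-,\bH)$ with $\bH$ only reductive up to a finite component group, which I expect to be the bulk of the paper (presumably by the geometric Nisnevich-fibration method of \cite{Pan2,Pan3}). Granting it, the remaining points above are essentially formal: checking that $\bH$ falls in the admissible class, tracking the twisting in the connecting bijections so that the comparison squares commute, and the torus computation for the right-hand surjectivity.
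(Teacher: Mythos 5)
Your treatment of the first assertion coincides with the paper's: identify $\bH=\ker\mu$, check it is quasi-reductive (Lemma \ref{Kernel_G_to_T} does this carefully via the coradical and the maximal finite multiplicative-type quotient, but your sketch is the same in substance), and read off injectivity from the trivial-kernel statement of Theorem \ref{MainThm1} through the connecting-map diagram. The problem is the middle term. You reduce exactness there to ``the purity part of the extended Grothendieck--Serre theorem for $\bH$'', i.e.\ to the statement that a class in $H^1_{\text{\'et}}(K,\bH)$ lying in the image of $H^1_{\text{\'et}}(R_{\mathfrak p},\bH)$ for every height one $\mathfrak p$ comes from $H^1_{\text{\'et}}(R,\bH)$. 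No such theorem is proved in this paper, and it is not what the ``extended form of Grothendieck--Serre'' refers to: Theorem \ref{MainThm1} is only the trivial-kernel statement for quasi-reductive $\bH$. Cohomological purity for $H^1_{\text{\'et}}(-,\bH)$ with $\bH$ non-abelian is a separate and in this generality unavailable statement, so deferring to it leaves the middle exactness unproved. What the paper actually does (Sections \ref{SectNorms}--\ref{purity}) is prove purity directly for the functor ${\cal F}(S)=\bT(S)/\mu(\bG(S))$: norms for finite flat extensions, unramified elements and specialization maps, homotopy invariance of ${\cal F}_{nr}$, the equating/nice-triple Theorem \ref{equating3}, and a Popescu limit argument to pass from smooth semi-local $k$-algebras to general regular semi-local $R$. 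None of this is ``essentially formal'', and your proposal supplies no substitute for it.

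Your right-hand surjectivity argument is also too quick. The reduction to surjectivity of $\bT(K)\to\bigoplus_{\mathfrak p}\bT(K)/\bT(R_{\mathfrak p})$ is correct, and for an induced torus the triviality of the divisor class group of a regular semi-local domain does the job; but ``reduce to induced tori via restriction of scalars'' is not a construction --- a general $R$-torus is neither a quotient nor a direct factor of an induced torus in a way that transports surjectivity of these particular quotient maps, and the issue is not absorbed into purity, which only gives exactness at the middle term. The paper instead passes to a finite \'{e}tale Galois cover splitting $\bT$, takes $Gal$-cohomology of the resulting short exact sequence, uses Theorem \ref{MainThm1} for the torus to show $H^1(Gal,\bT(\tilde{\mathcal O}))\to H^1(Gal,\bT(\tilde K))$ is injective, and proves by a norm and torsion-freeness argument that $\bT(K)/\bT(\mathcal O_{X,y})\to[\bT(\tilde K)/\bT(\tilde{\mathcal O}_{X,y})]^{Gal}$ is an isomorphism. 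You would need to reproduce an argument of this kind to close the right-hand end of the sequence.
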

Let us comment on the first assertion of the theorem. Let $\bH$ be the kernel of $\mu$. It turns out that $\bH$ is
a quasi-reductive $R$-group scheme (see Definition \ref{pre-reductive}).
There is a sequence of group sheaves
$1\to \bH\to \bG\to \bT\to 1$,
which is exact in the \'{e}tale topology on $Spec R$.
Theorem \ref{MainThm1} yields now the injectivity of the map
$\bT(R)/\mu(\bG(R)) \to \bT(K)/\mu(\bG(K))$.

\begin{thm}\label{s_simple_groups}
Let $R$ be a regular semi-local integral domain containing a field. Let $K$ be the fraction field of $R$.
Let $\bG_1$ and $\bG_2$ be two semi-simple $R$-group schemes.
Suppose the generic fibres $\bG_{1,K}$ and $\bG_{2,K}$ are isomorphic as algebraic $K$-groups. Then
the $R$-group schemes $\bG_1$ and $\bG_2$ are isomorphic.
\end{thm}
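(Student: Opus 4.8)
The plan is to realize an isomorphism between $\bG_1$ and $\bG_2$ as a rational point of a suitable torsor, and then to feed the torsor into the extended Grothendieck--Serre theorem proved in this paper. First I would introduce the $R$-functor $\mathbf{Iso}(\bG_1,\bG_2)$ whose $R'$-points, for an $R$-algebra $R'$, are the isomorphisms $\bG_{1,R'}\xrightarrow{\sim}\bG_{2,R'}$ of $R'$-group schemes. Since $\bG_1,\bG_2$ are semi-simple (in particular affine, smooth and of finite type), by Demazure--Grothendieck \cite{SGA3}, Exp.~XXIV, the functor $\mathbf{Aut}(\bG_1)$ is representable by a smooth affine $R$-group scheme fitting in an exact sequence $1\to\bG_1^{\ad}\to\mathbf{Aut}(\bG_1)\to\mathbf{Out}(\bG_1)\to 1$ with $\bG_1^{\ad}$ semi-simple (hence reductive) and $\mathbf{Out}(\bG_1)$ finite \'etale; in particular $\mathbf{Aut}(\bG_1)$ is a quasi-reductive $R$-group scheme in the sense of Definition~\ref{pre-reductive}. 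The functor $\mathbf{Iso}(\bG_1,\bG_2)$ carries commuting left and right actions of $\mathbf{Aut}(\bG_2)$ and $\mathbf{Aut}(\bG_1)$. Because $\Spec R$ is connected and $\bG_{1,K}\cong\bG_{2,K}$, the two semi-simple group schemes $\bG_1$ and $\bG_2$ have the same type over $R$; hence each of them is \'etale-locally on $\Spec R$ isomorphic to the split Chevalley form of that type, so $\mathbf{Iso}(\bG_1,\bG_2)$ is a nonempty, \'etale-locally trivial right torsor under $\mathbf{Aut}(\bG_1)$. By \'etale descent it is representable by an affine $R$-scheme and defines a class $[\mathbf{Iso}(\bG_1,\bG_2)]\in H^1_{\et}(R,\mathbf{Aut}(\bG_1))$.

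Next, the hypothesis that $\bG_{1,K}$ and $\bG_{2,K}$ are isomorphic as algebraic $K$-groups says precisely that $\mathbf{Iso}(\bG_1,\bG_2)(K)\neq\varnothing$, i.e.\ the image of $[\mathbf{Iso}(\bG_1,\bG_2)]$ in $H^1_{\et}(K,\mathbf{Aut}(\bG_1))$ is trivial. At this point I would invoke the extended Grothendieck--Serre theorem of the present paper (Theorem~\ref{MainThm1}) for the quasi-reductive $R$-group scheme $\mathbf{Aut}(\bG_1)$: a torsor under such a group that is trivial over $K$ is already trivial over $R$. Therefore $\mathbf{Iso}(\bG_1,\bG_2)(R)\neq\varnothing$, and any $R$-point is exactly an isomorphism $\bG_1\xrightarrow{\sim}\bG_2$ of $R$-group schemes, which proves the theorem.

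The main obstacle — everything else being essentially formal — is guaranteeing that $\mathbf{Aut}(\bG_1)$ genuinely lies within the scope of Theorem~\ref{MainThm1}, i.e.\ that the quasi-reductivity, smoothness and affineness hypotheses of the extended Grothendieck--Serre statement apply to it, and, as a secondary point, that $\mathbf{Iso}(\bG_1,\bG_2)$ is a genuine torsor rather than a mere pseudo-torsor; for the latter the observation that $\bG_1$ and $\bG_2$ acquire the same type over the connected ring $R$ is essential (without the generic isomorphism one could not even conclude that the two groups are \'etale forms of one another). If one prefers not to rely on the quasi-reductive case of Theorem~\ref{MainThm1} in full strength, an alternative is to push the torsor along $\mathbf{Aut}(\bG_1)\to\mathbf{Out}(\bG_1)$, trivialize the resulting finite-\'etale torsor over the regular ring $R$ using its generic triviality, thereby reducing to a (twisted) torsor under the reductive group $\bG_1^{\ad}$, and conclude by the classical Grothendieck--Serre statement; but the direct appeal to Theorem~\ref{MainThm1} is the cleaner route.
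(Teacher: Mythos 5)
Your proposal is exactly the paper's argument: view $\mathbf{Iso}(\bG_1,\bG_2)$ as a principal bundle under the quasi-reductive group scheme $\mathbf{Aut}(\bG_1)$, observe that the generic isomorphism makes it trivial over $K$, and apply Theorem~\ref{MainThm1} to produce an $R$-point. You in fact supply a detail the paper leaves implicit, namely that the generic isomorphism forces $\bG_1$ and $\bG_2$ to have the same type over the connected base, so that $\mathbf{Iso}(\bG_1,\bG_2)$ is a genuine \'etale-locally trivial torsor rather than a possibly empty pseudo-torsor.
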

%This theorem can not be directly derived from \cite{Ni} and \cite{NG}. Indeed,
%only geometrically connected group schemes are regarded there.
To prove Theorem \ref{s_simple_groups}
we need to work with automorphism group scheme of a semi-simple $R$-group scheme. The latter group scheme
is not geometrically connected in general.
So, Theorem \ref{s_simple_groups} can not be derived from \cite{FP} and \cite{Pan3}.

We state right below a theorem, which asserts that an extended version of Grothendieck--Serre conjecture holds for rings
$R$ as above. This latter theorem is proved in this paper. Theorem \ref{s_simple_groups} and the first assertion of
Theorem \ref{Aus_Buks_sequence_3} are derived from it.
To state the mentioned theorem it is convenient to give the following.
\begin{defn}[quasi-reductive]\label{pre-reductive}
Assume that $S$ is a Noetherian commutative ring. An $S$-group scheme $\bH$ is called {\rm quasi-reductive}
if there is a finite \'{e}tale $S$-group scheme $\bC$ and a smooth $S$-group scheme morphism
$\lambda: \bH \to \bC$ such that its kernel is a reductive $S$-group scheme and $\lambda$ is surjecive
locally in the \'{e}tale topology on $S$.
\end{defn}
Clearly, reductive $S$-group schemes are quasi-reductive.
Quasi-reductive $S$-group schemes are {\it affine and smooth} as $S$-schemes.
There are two types of quasi-reductive $S$-group schemes,
which we are focusing on in the present paper. The first one is
the automorphism group scheme of a semi-simple $S$-group scheme. The second one is obtained as follows:
take a reductive $S$-group scheme $\bG$, an $S$-torus $\bT$ and a smooth $S$-group morphism
$\mu: \bG \to \bT$. Then one can check that the kernel $\bH$ of $\mu$ is quasi-reductive.
It is an extension of a finite \'{e}tale $S$-group scheme $\bC$ of multiplicative type via a reductive
$S$-group scheme $\bG_0$.

Assume that $U$ is a regular scheme, $\bH$ is a quasi-reductive $U$-group scheme.
Recall that a $U$-scheme $\cH$ with an action of $\bH$ is called \emph{a principal $\bH$-bundle over $U$},
if $\cH$ is faithfully flat and quasi-compact over $U$ and the action is simple transitive,
that is, the natural morphism $\bH\times_U\cH\to\cH\times_U\cH$ is an isomorphism, see~\cite[Section~6]{Gr4}.
Since $\bH$ is $S$-smooth, such a bundle is trivial locally in \'{e}tale topology but in general not in Zariski topology.
Grothendieck and Serre conjectured that for a reductive $U$-group scheme $\bH$ a principal $\bH$-bundle $\cH$
over $U$  is trivial locally in Zariski topology, if it is trivial generically.
A {\it survey paper} on the topic is \cite{P2}.

The conjecture is true, if $\Gamma(U,\cO_U)$ contains a field (see \cite{FP} and \cite{Pan3}).
It is proved in \cite{Ni} that the conjecture is true in general for discrete valuation rings.
This result is extended in \cite{PSt} to the case of semi-local Dedekind integral domains
assuming that $\bG$ is simple simply connected and isotropic in a certain precise sense.
In \cite{NG} results of \cite{Ni} and \cite{PSt} are extended further.
It is proved there that the conjecture is true in general for the case of semi-local Dedekind integral domains.
The following result is a further extension of the main theorem of \cite{Pan3}.
\begin{thm}\label{MainThm1}
Let $R$ be a regular semi-local integral domain containing a field. Let $K$ be the fraction field of $R$.
Let $\bH$ be a quasi-reductive group scheme over $R$. Then the map
\[
  \textrm{H}^1_{\text{\'et}}(R,\bH)\to \textrm{H}^1_{\text{\'et}}(K,\bH),
\]
\noindent
induced by the inclusion of $R$ into $K$ has a trivial kernel.
In other words, under the above assumptions on $R$ and $\bH$, each principal $\bH$-bundle over $R$ having a $K$-rational point is trivial.
\end{thm}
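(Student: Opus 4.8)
The plan is to reduce the assertion to the Grothendieck--Serre statement for reductive group schemes over $R$ (established in~\cite{FP} and~\cite{Pan3}) by a d\'evissage along the extension that defines quasi-reductivity. By Definition~\ref{pre-reductive} there is a short exact sequence of sheaves of groups in the \'etale topology on $\Spec R$
\begin{equation*}
1 \to \bG_0 \to \bH \xrightarrow{\lambda} \bC \to 1 ,
\end{equation*}
where $\bG_0 = \ker\lambda$ is a reductive $R$-group scheme and $\bC$ is a finite \'etale $R$-group scheme, not assumed commutative (e.g.\ $\bC=\Out$ when $\bH$ is the automorphism group scheme of a semi-simple group). I will treat the ``reductive part'' $\bG_0$ using~\cite{Pan3}, the ``finite \'etale part'' $\bC$ using elementary properties of finite \'etale schemes over the normal domain $R$, and then glue the two via the exact sequences of pointed sets attached to the displayed extension over $R$ and over $K$.

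First I would record the following elementary fact: for every finite \'etale $R$-scheme $X$, the restriction map $X(R)\to X(K)$ is a bijection. Writing $X=\Spec A$ with $A=\prod_i A_i$ the decomposition into connected components (each $A_i$ a normal domain, finite over $R$), one has $A\otimes_R K=\prod_i \mathrm{Frac}(A_i)$, so a $K$-point of $X$ picks out an index $i$ with $\mathrm{Frac}(A_i)=K$; as $A_i$ is integral over $R$ and $R$ is integrally closed in $K$, this forces $A_i=R$, whence the desired $R$-point. This has two consequences: $\bC(R)=\bC(K)$; and, since a torsor under a finite \'etale group is again finite \'etale over its base, every $\bC$-torsor over $R$ that becomes trivial over $K$ is already trivial over $R$, i.e.\ the map $H^1_{\et}(R,\bC)\to H^1_{\et}(K,\bC)$ has trivial kernel.

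Next I would upgrade the main theorem of~\cite{Pan3} to a rigidity statement for $\bG_0$: if $\xi,\eta\in H^1_{\et}(R,\bG_0)$ have equal images in $H^1_{\et}(K,\bG_0)$, then $\xi=\eta$. Indeed, twisting $\bG_0$ by a torsor representing $\eta$ produces a reductive inner form ${}^{\eta}\bG_0$ together with a bijection $H^1_{\et}(R,\bG_0)\cong H^1_{\et}(R,{}^{\eta}\bG_0)$ sending $\eta$ to the neutral class; the image of $\xi$ becomes neutral over $K$, hence is neutral by~\cite{Pan3} applied to ${}^{\eta}\bG_0$, and therefore $\xi=\eta$. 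This is the one step carrying genuine content, and it is used for arbitrary inner forms of $\bG_0$, not merely for $\bG_0$ itself.

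It remains to assemble. Let $\cH$ be a principal $\bH$-bundle over $R$ with $\cH(K)\neq\emptyset$ and let $[\cH]\in H^1_{\et}(R,\bH)$ be its class; it restricts to the neutral class over $K$. Its image $\lambda_*[\cH]\in H^1_{\et}(R,\bC)$ is trivial over $K$, hence trivial by the first step, so exactness of the pointed-set cohomology sequence of the displayed extension yields $\xi\in H^1_{\et}(R,\bG_0)$ with $\lambda_*\xi=[\cH]$. Over $K$, the class $\lambda_*(\xi|_K)$ is neutral, so $\xi|_K=\partial_K(c)$ for some $c\in\bC(K)$, where $\partial$ denotes the connecting map; by the first step $c$ is the restriction of some $\tilde c\in\bC(R)$, and by functoriality $\partial_R(\tilde c)|_K=\partial_K(c)=\xi|_K$. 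Thus $\xi$ and $\partial_R(\tilde c)$ agree over $K$, so $\xi=\partial_R(\tilde c)$ by the rigidity step; and since $\partial_R(\tilde c)$ lies in the image of $\partial_R$, it maps to the neutral class under $\lambda_*$, whence $[\cH]=\lambda_*\xi$ is neutral, i.e.\ $\cH$ is trivial over $R$. I do not expect a serious obstacle in the above: the hard analysis is entirely imported from~\cite{Pan3}, and the points that demand care are merely that the defining extension of $\bH$ is in general non-central --- so one works with exact sequences of \emph{pointed sets} rather than long exact sequences of abelian groups --- and that the identity $X(R)=X(K)$ uses normality of $R$ in an essential way.
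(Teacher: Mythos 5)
Your proposal is correct and follows essentially the same route as the paper: the exact sequence of \'etale sheaves $1\to\bG_0\to\bH\to\bC\to 1$, the identification $\bC(R)=\bC(K)$ and the triviality of $\ker\bigl(H^1_{\text{\'et}}(R,\bC)\to H^1_{\text{\'et}}(K,\bC)\bigr)$ for the finite \'etale quotient, injectivity of $H^1_{\text{\'et}}(R,\bG_0)\to H^1_{\text{\'et}}(K,\bG_0)$ imported from \cite{Pan3}, and the diagram chase on the exact sequences of pointed sets (which the paper leaves as ``a simple diagram chase'' and you carry out explicitly and correctly, minding non-centrality). The only cosmetic differences are that you prove $X(R)=X(K)$ for finite \'etale $X$ via integrality and normality rather than the paper's codimension-two extension argument, and you deduce injectivity for $\bG_0$ from the trivial-kernel statement by twisting rather than citing it directly.
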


\begin{cor}\label{CorOfMainThm1}
Under the hypothesis of Theorem~\ref{MainThm1}, the map
\[
  \textrm{H}^1_{\text{\'et}}(R,\bH)\to \textrm{H}^1_{\text{\'et}}(K,\bH),
\]
\noindent
induced by the inclusion of $R$ into $K$, is injective. Equivalently, if $\cH_1$ and $\cH_2$ are two principal $\bH$-bundles isomorphic over
$\Spec K$,
then they are isomorphic.
\end{cor}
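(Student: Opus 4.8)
The plan is to deduce Corollary~\ref{CorOfMainThm1} from Theorem~\ref{MainThm1} by the standard torsor twisting argument; the only substantive point is that the class of quasi-reductive group schemes is stable under inner twisting. So let $\cH_1,\cH_2$ be principal $\bH$-bundles over $R$ with $\cH_1\times_R K\cong\cH_2\times_R K$. Let ${}^{\cH_2}\bH$ denote the inner twist of $\bH$ by $\cH_2$, i.e.\ the $R$-group scheme of $\bH$-equivariant automorphisms of $\cH_2$; it acts on $\cH_2$, and there is the usual twisting bijection of pointed sets
\[
\tau\colon \textrm{H}^1_{\et}(R,{}^{\cH_2}\bH)\ \isom\ \textrm{H}^1_{\et}(R,\bH),\qquad \tau(\ast)=[\cH_2],
\]
compatible with restriction to $K$ since $({}^{\cH_2}\bH)\times_R K={}^{\cH_2\times_R K}(\bH\times_R K)$. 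As $[\cH_1]$ and $[\cH_2]$ coincide in $\textrm{H}^1_{\et}(K,\bH)$, the class $\tau^{-1}[\cH_1]$ lies in the kernel of $\textrm{H}^1_{\et}(R,{}^{\cH_2}\bH)\to\textrm{H}^1_{\et}(K,{}^{\cH_2}\bH)$. Hence, once ${}^{\cH_2}\bH$ is known to be quasi-reductive, Theorem~\ref{MainThm1} gives $\tau^{-1}[\cH_1]=\ast$, i.e.\ $\cH_1\cong\cH_2$ over $R$ --- which is exactly the asserted injectivity.

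It remains to check that ${}^{\cH_2}\bH$ is quasi-reductive. Take $\lambda\colon\bH\to\bC$ as in Definition~\ref{pre-reductive}: $\bC$ finite \'etale over $R$, $\lambda$ smooth and surjective \'etale-locally, with reductive kernel $\bG_0$. Since $\bG_0$ is normal in $\bH$, conjugation by $\bH$ preserves $\bG_0$ and descends to the conjugation action of $\bC$ on itself; pushing $\cH_2$ forward along $\lambda$ yields a $\bC$-torsor $\lambda_\ast\cH_2$, and twisting the exact sequence $1\to\bG_0\to\bH\to\bC\to1$ by $\cH_2$ produces an \'etale-locally exact sequence of $R$-group schemes
\[
1\ \longrightarrow\ {}^{\cH_2}\bG_0\ \longrightarrow\ {}^{\cH_2}\bH\ \longrightarrow\ {}^{\lambda_\ast\cH_2}\bC\ \longrightarrow\ 1 .
\]
Now ${}^{\lambda_\ast\cH_2}\bC$ is a twist of a finite \'etale group scheme, hence finite \'etale; ${}^{\cH_2}\bG_0$ is a twist of the reductive group scheme $\bG_0$ by a form of its automorphism group scheme, hence again reductive, since ``affine, smooth, with connected reductive geometric fibres'' are fppf-local conditions; and ${}^{\cH_2}\bH\to{}^{\lambda_\ast\cH_2}\bC$ is smooth and \'etale-locally surjective because $\lambda$ is and these properties are fppf-local on the base. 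So ${}^{\cH_2}\bH$ is quasi-reductive, and the proof is complete.

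The one step requiring genuine care is this last one: that inner twisting carries the defining datum $(\lambda,\bC,\bG_0)$ of a quasi-reductive group scheme to a datum of the same type --- in particular that the twisted kernel ${}^{\cH_2}\bG_0$ stays reductive even when $\bC$ acts on $\bG_0$ through outer automorphisms, and that the twist of $\lambda$ remains smooth and \'etale-locally surjective onto a finite \'etale group scheme. Everything else is the formal twisting yoga together with a direct application of Theorem~\ref{MainThm1}.
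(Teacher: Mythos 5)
Your proof is correct and is essentially the paper's own argument: the paper phrases the twist ${}^{\cH_2}\bH$ as the group scheme $\Aut(\cH_1)$ and the class $\tau^{-1}[\cH_1]$ as the $\Aut(\cH_1)$-torsor $\Iso(\cH_1,\cH_2)$, then applies Theorem~\ref{MainThm1} exactly as you do. Your explicit verification that an inner twist of a quasi-reductive group scheme is again quasi-reductive is left implicit in the paper, yet it is genuinely needed for Theorem~\ref{MainThm1} to apply, so including it is a worthwhile addition.
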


\begin{proof}
Let $\cH_1$ and $\cH_2$ be two principal $\bH$-bundles isomorphic over
$\spec K$.
Let $\Iso(\cH_1,\cH_2)$ be the scheme of isomorphisms of principal $\bH$-bundles.
This scheme is a principal $\Aut\cH_1$-bundle.
By Theorem~\ref{MainThm1} it is trivial, and we see that $\cH_1\cong\cH_2$.
\end{proof}

Theorems  \ref{MainThm1} and \ref{s_simple_groups} are proved in Section \ref{2_theorems}.
Theorem \ref{Aus_Buksbaum_3} is proved in Section \ref{SectAus_Buksbaum_3}.

\section{Proof of Theorems \ref{MainThm1} and \ref{s_simple_groups}}\label{2_theorems}
We begin with the following general
\begin{lem}\label{f_flat_morphism}
Let $X$ be a regular semi-local integral domain. Let $\pi: X'\to X$ be a finite morphism.
Let $\eta\in X$ be the generic point of $X$. Then
sections of $\pi$
over $X$ are in the bijection with sections of $\pi$ over $\eta$.
\end{lem}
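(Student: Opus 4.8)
The plan is to reduce the statement to a one-line fact about integral extensions of a normal domain. Write $X = \Spec R$ with $R$ the given regular semi-local domain, $K$ its fraction field, so that $\eta = \Spec K$; since a finite morphism is in particular affine, we may write $X' = \Spec A$ for a finite (hence integral) $R$-algebra $A$. Under these identifications a section of $\pi$ over $X$ is exactly an $R$-algebra homomorphism $A \to R$, a section of $\pi$ over $\eta$ is exactly an $R$-algebra homomorphism $A \to K$, and the natural map between the two sets of sections is induced by post-composition with the inclusion $R \hookrightarrow K$.

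Injectivity of this map is immediate: $R \hookrightarrow K$ is a monomorphism of rings because $R$ is a domain, so two $R$-algebra maps $A \to R$ that agree after composing with $R \hookrightarrow K$ already agree. For surjectivity, take any $R$-algebra map $\psi : A \to K$; I claim $\psi(A) \subseteq R$. Indeed, every $a \in A$ is integral over $R$, satisfying some monic relation $a^n + r_{n-1}a^{n-1} + \dots + r_0 = 0$ with $r_i \in R$; applying $\psi$ shows $\psi(a) \in K$ satisfies the same monic relation over $R$, hence is integral over $R$. Since $R$ is regular it is normal, hence integrally closed in its fraction field $K$, so $\psi(a) \in R$. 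Thus $\psi$ factors uniquely through $R \hookrightarrow K$, producing the required section of $\pi$ over $X$, which visibly restricts to the given section over $\eta$.

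There is no genuine obstacle here: the only property of $R$ actually used is that it is a domain integrally closed in its fraction field, which regularity supplies via normality, and neither semi-locality nor any flatness hypothesis on $\pi$ enters. If one insists on naming a "main step" it is simply the observation that $\psi$ carries integral elements to integral elements, combined with the integral closedness of $R$ in $K$.
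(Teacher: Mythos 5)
Your proof is correct, and it takes a genuinely different and more elementary route than the paper's. You reduce everything to the observation that a finite $R$-algebra $A$ is integral over $R$, so any $R$-algebra map $A\to K$ sends each element to an element of $K$ integral over $R$, which must lie in $R$ because a regular domain is normal; injectivity is the triviality that $R\hookrightarrow K$ is monic. The paper instead extends the generic section geometrically in two stages: using projectivity of $\pi$ and regularity of $X$ it first produces a section over the complement of a closed subset $Z$ of codimension two (essentially the valuative criterion at height-one primes), and then crosses $Z$ via the Hartogs-type identity $\Gamma(X,\mathcal O_X)=\Gamma(X\setminus Z,\mathcal O_X)$. Your argument buys simplicity and generality -- it uses neither semi-locality nor regularity beyond integral closedness of $R$ in $K$, and it never needs to factor $\pi$ through $\Aff^n_X$ -- whereas the paper's two-step ``extend off codimension two, then across it'' scheme is the pattern the author reuses for non-finite situations (torsors, projective homogeneous spaces) where no integrality argument is available; for a finite morphism over a normal affine base your direct approach is the cleaner one.
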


\begin{proof}
It suffices to check that each section $s: \eta\to X'$ of $\pi$
can be extended to a section of $\pi$ over $X$.

Decompose $\pi$ as a composition
$X' \xra{i} \Aff^n_X \xra{p} X$, where $p$ is the projection and $i$ is a closed embedding.
%Clearly, $\Pro^n(X)=\Pro^n(\eta)$. Let $\Pro^n_X$ be the projective space containing $\Aff^n_X$
%as an open subscheme. Let $\bar p: \Pro^n_X \to X$ be the projection.
%Since $\pi$ is finite $i(X')$ is closed in $\Pro^n_X$.
Let $s: \eta\to X'$ be a section of $\pi$. Since $X$ is a regular semi-local and $\pi$ is projective
there is a closed codimension two
subset $Z$ in $X$ and a section $\phi: X-Z \to X'$ of $\pi$ that extends $s$.
Since $\Gamma(X, \mathcal O_X)=\Gamma(X-Z, \mathcal O_X)$ the section $\phi$
is extended to a section $\tilde \phi$ of $\pi$. The lemma is proved.
\end{proof}

\begin{cor}\label{f_gr_scheme}
Let $X$, $\eta\in X$ be as in the previous lemma and $\bE$ be a finite \'{e}tale group $X$-scheme.
Then
the $\eta$-points of $\bE$ coincides with the $X$-points of $\bE$.
\end{cor}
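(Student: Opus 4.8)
The plan is to read off this statement directly from Lemma~\ref{f_flat_morphism}. Write $\pi\colon \bE\to X$ for the structure morphism of the group scheme $\bE$. Since $\bE$ is finite \'etale over $X$, it is in particular finite over $X$, so $\pi$ is a finite morphism and Lemma~\ref{f_flat_morphism} applies to it. By definition an $X$-point of $\bE$ is the same thing as a section of $\pi$ over $X$, and an $\eta$-point of $\bE$ is the same thing as a section of $\pi$ over $\eta$; the map ``restrict along $\eta\hookrightarrow X$'' carries the first set into the second, and this is precisely the bijection furnished by the lemma. Hence the restriction map $\bE(X)\to\bE(\eta)$ is a bijection, which is the assertion.

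I do not anticipate any real difficulty here: the corollary is a direct specialization of the lemma, and the group-scheme structure on $\bE$ plays no role in the argument — it is recorded only because this is the form in which the statement is used later (for instance for the finite \'etale group scheme $\bC$ occurring in Definition~\ref{pre-reductive}). The one thing worth checking is that the projectivity hypothesis used inside the proof of Lemma~\ref{f_flat_morphism} is satisfied by $\pi$: since $X=\Spec R$ is affine and $\bE$ is finite over $X$, the $R$-algebra $\Gamma(\bE,\cO_{\bE})$ is a finite $R$-module, so a choice of generators gives a closed embedding $\bE\hookrightarrow\Aff^n_X$ and then a projective compactification. Thus Lemma~\ref{f_flat_morphism} does apply to $\pi$ exactly as stated, and there is no obstacle to overcome.
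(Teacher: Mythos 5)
Your proposal is correct and is exactly the argument the paper intends: the corollary is stated without proof precisely because it is the special case of Lemma~\ref{f_flat_morphism} applied to the structure morphism $\pi\colon\bE\to X$, which is finite, with the group structure playing no role. Your added remark verifying that $\pi$ is projective (via a closed embedding into $\Aff^n_X$ coming from generators of the finite $R$-module $\Gamma(\bE,\cO_{\bE})$) matches the decomposition already used inside the proof of the lemma, so nothing further is needed.
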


\begin{cor}\label{inj_on_H1}
Under the hypothesis of Corollary \ref{f_gr_scheme}
the kernel of the pointed set map
$H^1_{\text{\'et}}(X,\bE)\to H^1_{\text{\'et}}(\eta,\bE)$
is trivial.
\end{cor}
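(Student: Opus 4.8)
The statement to prove is that the kernel of $H^1_{\text{\'et}}(X,\bE)\to H^1_{\text{\'et}}(\eta,\bE)$ is trivial, where $X$ is a regular semi-local integral domain with generic point $\eta$ and $\bE$ is a finite \'etale $X$-group scheme. The plan is to reduce this to Corollary \ref{f_gr_scheme} (coincidence of $\eta$-points and $X$-points) applied not to $\bE$ itself but to the automorphism group scheme of a principal $\bE$-bundle, exactly as in the proof of Corollary \ref{CorOfMainThm1} above.

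First I would take a principal $\bE$-bundle $\cP$ over $X$ whose class lies in the kernel, i.e. $\cP_\eta$ is a trivial $\bE$-bundle over $\eta$. A choice of trivialization of $\cP_\eta$ is exactly an $\eta$-point of $\cP$. So the task is to show that $\cP$ has an $X$-point (which would force $\cP$ to be trivial, since a principal bundle under a group scheme with a section is trivial). Now the key observation is that since $\bE$ is finite \'etale over $X$ and $\cP$ is an \'etale-locally trivial $\bE$-bundle, the structure morphism $\pi:\cP\to X$ is itself finite \'etale: these properties can be checked \'etale-locally on $X$, where $\cP$ becomes $\bE\times_X(-)$, which is finite \'etale. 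In particular $\pi$ is a finite morphism.

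Next I would simply invoke Lemma \ref{f_flat_morphism} with $X'=\cP$: since $\pi:\cP\to X$ is finite and $X$ is a regular semi-local integral domain, sections of $\pi$ over $X$ are in bijection with sections of $\pi$ over $\eta$. We already produced a section over $\eta$ (from the trivialization of $\cP_\eta$), hence $\pi$ admits a section over $X$, i.e. $\cP(X)\neq\emptyset$. A principal $\bE$-bundle with an $X$-point is trivial, so the class of $\cP$ in $H^1_{\text{\'et}}(X,\bE)$ is trivial. This shows the kernel is trivial, as claimed.

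There is essentially no serious obstacle here; the corollary is a formal consequence of Lemma \ref{f_flat_morphism} once one recognizes that a principal bundle under a finite \'etale group scheme is again finite (hence in particular finite and projective over $X$, so the lemma applies). The only point requiring a line of justification is that $\pi:\cP\to X$ is finite \'etale, which follows by fpqc (indeed \'etale) descent of the properties "finite" and "\'etale" along the faithfully flat quasi-compact cover trivializing $\cP$, together with the fact that $\cP$ is faithfully flat and quasi-compact over $X$ by the definition of a principal bundle recalled in the introduction.
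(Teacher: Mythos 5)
Your argument is correct and is essentially the paper's own proof: the paper likewise notes that by standard descent a principal $\bE$-bundle $\mathcal E$ over $X$ is finite and \'etale as an $X$-scheme, and then applies Lemma \ref{f_flat_morphism} to get $\mathcal E(X)=\mathcal E(\eta)$, so a generic trivialization extends to a section over $X$. (Your opening sentence about passing to the automorphism group scheme is a red herring --- the argument you actually carry out, applying the section-extension lemma directly to the bundle itself, is exactly the intended one.)
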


\begin{proof}
Let $\cal E$ be a principal $\bE$-bundle over $X$. The standard descent arguments shows that
the $X$-scheme $\cal E$ is finite and \'{e}tale. Thus, ${\cal E}(X)=\cal E(\eta)$. This proves the corollary.
\end{proof}

\begin{proof}[Proof of Theorem \ref{MainThm1}]
Since $\bH$ is quasi-reductive $R$-group scheme, there is a finite \'{e}tale $R$-group scheme $\bC$ and a smooth $R$-group scheme morphism
$\lambda: \bH \to \bC$ such that its kernel $\bG$ is a reductive $R$-group scheme and $\lambda$ is
surjecive locally in the \'{e}tale topology on $S$.
The sequence of the \'{e}tale sheaves
$1\to \bG\to \bH\to \bC\to 1$ is exact. Thus, it induces a commutative diagram of pointed set maps with exact rows
$$
\SelectTips{cm}{}
\xymatrix @C=2pc @R=10pt {
\bC(R) \ar[r]^{\partial} \ar[dd]_{\alpha} & {H_\text{{\'et}}^1(R,\bG)} \ar[r] \ar[dd]_{\beta} & {H_\text{{\'et}}^1(R,\bH)} \ar[r] \ar[dd]_{\gamma} & {H_\text{{\'et}}^1(R,\bC)} \ar[dd]_{\delta} \\
{} & {} & {} & {} & {} \\
\bC(K) \ar[r]^{\partial} & {H_\text{{\'et}}^1(K,\bG)} \ar[r] & {H_\text{{\'et}}^1(K,\bH)} \ar[r] & {H_\text{{\'et}}^1(K,\bC)} \\
}
$$
The map $\alpha$ is bijective by Corollary \ref{f_gr_scheme}, the map $\delta$ has the trivial kernel by Corollary
\ref{inj_on_H1}, the map $\beta$ is injective by
\cite[Corollary 1.2]{Pan3}
%\ref{CorOfMainThm1}.
Now a simple diagram chase shows that $ker(\gamma)=\ast$. This proves the theorem.
\end{proof}

\begin{rem}\label{Ch_G_P}
The statement of \cite[Lemma]{Ch-G-P} and its proof are non-acurate both, since
the authors do not assume
the injectivity of the map
$H_\text{{\'et}}^1(R,\bG^0)\to H_\text{{\'et}}^1(K,\bG^0_R)$.
\end{rem}

\begin{proof}[Proof of Theorem \ref{s_simple_groups}]
The $R$-group scheme $\underline {\text {Aut}}:=\underline {\text {Aut}}_{R-gr-sch}(\bG_1)$
is quasi-reductive by
\cite{D-G}.
The $R$-scheme
$\underline {\text {Iso}}:=\underline {\text {Iso}}_{R-gr-sch}({\bf G_1},{\bf G_2})$
is a principal $\underline {\text {Aut}}$-bundle.
An isomorphism $\phi: \bG_{1,K} \to \bG_{2,K}$ of algebraic $K$-groups gives a section of
$\underline {\text {Iso}}$ over $K$. So, $\underline {\text {Iso}}_K$
is a trivial principal $\underline {\text {Aut}}_K$-bundle.
Hence $\underline {\text {Iso}}$ is a trivial
principal $\underline {\text {Aut}}$-bundle by Theorem \ref{MainThm1}.
Thus, it has a section over $R$. So, there is an $R$-group scheme isomorphism
$\bG_1 \cong \bG_2$.
\end{proof}

\section{Proof of the first assertion of Theorem \ref{Aus_Buksbaum_3}}
\label{One_Lemma}
\begin{lem}\label{Kernel_G_to_T}
Let $X$ be a regular irreducible affine scheme.
Let $\bG$ be a reductive $X$-group scheme and $\bT$ be an $X$-torus.
Let $\mu: \bG \to \bT$
be an $X$-group schemes morphism, which is smooth as a scheme morphism.
Then the kernel of $\mu$ is
a quasi-reductive $X$-group scheme.
\end{lem}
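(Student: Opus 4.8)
The plan is to identify $\bH := \Ker(\mu)$ with the right structure predicted by Definition \ref{pre-reductive}, namely to exhibit a finite \'etale $X$-group scheme $\bC$ and a smooth surjective (locally for the \'etale topology) $X$-group morphism $\lambda: \bH \to \bC$ whose kernel is reductive. First I would check that $\bH$ is affine, flat and smooth over $X$: affineness is clear since $\bH$ is a closed subscheme of $\bG$; flatness and smoothness follow because $\mu$ is a smooth morphism of schemes, so every fibre $\bH_x = \Ker(\mu_x)$ over a point $x \in X$ is smooth of constant dimension $\dim \bG_x - \dim \bT_x$, and a flat group scheme with smooth fibres is smooth. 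The essential point is then to understand the geometric fibres $\bH_{\bar x}$, where $\bar x = \Spec \Omega$ for an algebraically closed field: $\mu_{\bar x}: \bG_{\bar x} \to \bT_{\bar x}$ is a smooth homomorphism from a connected reductive group to a torus, so $\bH_{\bar x}$ is a (possibly disconnected) reductive group over $\Omega$, with identity component $\bH^0_{\bar x}$ reductive and component group $\pi_0(\bH_{\bar x})$ finite of multiplicative type (a subgroup of the cocharacter-type data of $\bT$); in fact $\pi_0(\bH_{\bar x})$ is a quotient of the kernel of $X^*(\bT_{\bar x}) \to X^*(\bG_{\bar x})$, hence finite.

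Next I would pass from the geometric-fibrewise description to an $X$-global one. Since $\bH$ is $X$-smooth with geometrically reductive fibres, the functor $\bH^0$ of relative identity components is represented by an open (hence also closed, by properness of $\pi_0$) normal subgroup scheme $\bG_0 := \bH^0 \subseteq \bH$, which is $X$-reductive; this is a standard fact on smooth affine group schemes with connected-reductive-or-reductive fibres, and one may invoke \cite[Exp.~XIX]{SGA3} (or \cite{D-G}) for the representability and for reductivity of $\bG_0$. The quotient $\bC := \bH/\bG_0$ is then a finite \'etale $X$-group scheme — finite because each fibre $\pi_0(\bH_{\bar x})$ is finite, \'etale because $\bH$ is smooth and $\bG_0$ is open in $\bH$ — and the canonical projection $\lambda: \bH \to \bC$ is a smooth $X$-group morphism, faithfully flat and in particular surjective locally in the \'etale topology, with $\Ker \lambda = \bG_0$ reductive. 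This is precisely the data required by Definition \ref{pre-reductive}, so $\bH$ is quasi-reductive.

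The main obstacle I anticipate is the representability of the relative identity-component functor $\bH^0$ as an open and closed subgroup scheme, and the accompanying claim that $\bC = \bH/\bG_0$ exists as a finite \'etale group scheme rather than merely as an algebraic space or fppf sheaf. Over a general regular base this is not completely formal; the cleanest route is to use that $\bH \to X$ is smooth and affine with geometric fibres having finitely many components of bounded order, so that $\pi_0(\bH/X)$ is a finite \'etale $X$-scheme and $\bH \to \pi_0(\bH/X)$ is the component map — here one can cite \cite[Exp.~VI$_{\mathrm B}$]{SGA3} on connected components of smooth group schemes. Granting that, the reductivity of $\bG_0$ over $X$ is checked fibrewise (it is $X$-smooth, affine, with connected reductive geometric fibres), and the verification that $\lambda$ is a group morphism with the stated kernel is routine. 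I would also remark that, as noted in the introduction, $\bC$ is moreover of multiplicative type, since on each geometric fibre $\pi_0(\bH_{\bar x})$ embeds into the finite group $\Hom(X^*(\bT_{\bar x})/\Im X^*(\mu_{\bar x}), \mathbb G_m)$ — but this refinement is not needed for the statement of the lemma.
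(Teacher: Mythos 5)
Your route is genuinely different from the paper's, and its skeleton is sound: you take $\bH=\Ker(\mu)$, pass directly to the relative identity component $\bH^0$, and try to produce $\bC=\pi_0(\bH/X)$ as a finite \'{e}tale group scheme with $\lambda$ the component map. The paper instead factors $\mu$ through the coradical, $\mu=\bar\mu\circ\alpha$ with $\alpha:\bG\to Corad(\bG)$, observes that $\Ker(\bar\mu)$ is a smooth group of multiplicative type, and uses the canonical exact sequence $1\to\bT^0\to\Ker(\bar\mu)\xrightarrow{can}\bM\to 1$ (maximal subtorus and maximal finite multiplicative-type quotient, $\bM$ \'{e}tale by smoothness); then $\lambda:=can\circ(\alpha|_{\bH})$ and $\bG^0:=\Ker(\lambda)$, whose reductivity is checked fibrewise via $1\to\bG^{ss}_\Omega\to\bG^0_\Omega\to\bT^0_\Omega\to 1$. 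The point of this detour is precisely to avoid the step you yourself flag as the main obstacle: the representability of $\pi_0(\bH/X)$ as a finite \'{e}tale scheme and the closedness of $\bH^0$. SGA3 Exp.~VI$_{\mathrm B}$ gives you $\bH^0$ as an \emph{open} subgroup scheme, but finiteness and \'{e}taleness of the quotient do not follow from your stated criterion that the geometric fibres have ``finitely many components of bounded order''; what is needed is that the number of geometric components be \emph{locally constant} (quasi-finite plus fibrewise finiteness does not give finiteness without a properness input). That local constancy is true here, but the clean way to see it is exactly the paper's observation: $\pi_0(\bH_{\bar x})=\pi_0(\Ker(\bar\mu)_{\bar x})$, and a morphism of locally isotrivial multiplicative-type groups over an irreducible normal base is \'{e}tale-locally given by a single map of lattices, so the component count cannot jump. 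In other words, to close your gap you end up reproving the multiplicative-type structure theory that the paper invokes off the shelf from SGA3 Exp.~IX--X.

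Two smaller inaccuracies worth fixing if you pursue your version: the component group $\pi_0(\bH_{\bar x})$ is the Cartier dual of the torsion of the \emph{cokernel} of $X^*(\bT_{\bar x})\to X^*(Corad(\bG)_{\bar x})$, not a quotient of the kernel of $X^*(\bT_{\bar x})\to X^*(\bG_{\bar x})$ (the conclusion of finiteness is unaffected); and the smoothness of $\bH$ is immediate, since $\bH$ is the base change of the smooth morphism $\mu$ along the unit section of $\bT$, so there is no need for the ``flat with smooth fibres'' argument. With the local-constancy point supplied, your argument does prove the lemma, and it has the mild advantage of identifying $\bC$ intrinsically as $\pi_0(\bH/X)$; the paper's proof is more elementary in that it only ever quotients multiplicative-type groups, where all the needed representability statements are standard.
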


\begin{proof}
Consider the coradical $Corad(\bG)$ of $\bG$ together with the canonical
$X$-group morphism $\alpha: \bG\to Corad(\bG)$. By the universal property of the $X$-group morphism $can$
there is a unique $X$-group morphism $\bar \mu: Corad(\bG)\to T$ such that
$\mu=\bar \mu\circ \alpha$. Since $\mu$ is surjective locally for the \'{e}tale topology, hence so is $\bar \mu$.
Let $\text{ker}(\bar \mu)$ be the kernel of $\bar \mu$ and let $\bH:=\alpha^{-1}(\text{ker}(\bar \mu))$
be the scheme theoretic pre-image of $\text{ker}(\bar \mu)$. Clearly, $\bH$ is a closed $X$-subgroup scheme of $\bG$, which
is the kernel of $\mu$. We must check that $\bH$ is a quasi-reductive.

The $X$-group scheme $\text{ker}(\bar \mu)$ is of multiplicative type. Hence there is a
finite $X$-group scheme $\bM$ of  multiplicative type and a faithfully flat $X$-group scheme morphism
$can: \text{ker}(\bar \mu)\to \bM$, which has the following property:
for any finite $X$-group scheme $\bM'$ of  multiplicative type
and an $X$-group morphism $\phi: \text{ker}(\bar \mu)\to \bM'$ there is a unique $X$-group morphism
$\psi: \bM\to \bM'$ with $\psi \circ can=\phi$. It is known that the kernel of $can$ is an $X$-torus.
Call it $\bT^0$. Since $\mu$ is smooth, hence so is $\bar \mu$. Thus, the $X$-group scheme
$\text{ker}(\bar \mu)$ is an $X$-smooth scheme. This yields that $M$ is \'{e}tale over $X$.

Let $\beta=\alpha|_{\bH}: \bH\to \text{ker}(\bar \mu)$
and let $\bG^0:=\beta^{-1}(\bT^0)$
be the scheme theoretic pre-image of $\bT^0$.
Clearly, $\bG^0$ is a closed $X$-subgroup scheme of $\bH$, which
is the kernel of the morphism $can\circ \beta: \bH\to \bM$.
Let $\gamma=\beta|_{\bG^0}: \bG^0\to \bT^0$.

The $X$-group scheme $\bM$ is finite and \'{e}tale.
The morphism $can$ is smooth. The morphism $\beta$ is smooth as
a base change of the smooth morphism $\alpha$. Thus,
$\lambda:=can\circ \beta$ is smooth.
It is also surjective locally in the \'{e}tale topology on $X$, because $can$ and $\beta$ have this property.
By the construction $\bG^0=\text{ker}(\lambda)$.
So, to prove that $\bH$ is quasi-reductive
it remains to check the reductivity of $\bG^0$.

The $X$-group scheme $\bG^0$ is affine as a closed $X$-subgroup scheme of the reductive $X$-group scheme $\bG$.
Prove now that $\bG^0$ is smooth over $X$. Indeed,
the morphism $\gamma$ is smooth as a base change of the smooth morphism $\alpha$.
The $X$-scheme $\bT^0$ is smooth, since it is an $X$-torus.
Thus, the $X$-scheme $\bG^0$ is smooth.

Write $X$ as $Spec S$ for a regular integral domain $S$.
It remains to verify that for
each algebraically closed field $\Omega$ and for each ring homomorphism $S\to\Omega$ the scalar extension $\bG^0_\Omega$ is
a connected reductive
algebraic group over $\Omega$.
Firstly, recall that $\text{ker}(\alpha)$ is a semi-simple $S$-group scheme. It is the
$S$-group scheme $\bG^{ss}$ under the notation of \cite{D-G}. Clearly,
$\text{ker}(\gamma)=\text{ker}(\alpha)$. Thus, $\text{ker}(\gamma)=\bG^{ss}$
is a semi-simple $S$-group scheme. Since the morphism $\gamma$ is smooth for
each algebraically closed field $\Omega$ and for each ring homomorphism $S\to\Omega$
we have an exact sequence of smooth algebraic groups over $\Omega$
$$1\to \bG^{ss}_\Omega \to \bG^0_\Omega \to \bT^0_\Omega \to 1.$$
The groups $\bT^0_\Omega$, $\bG^{ss}_\Omega$ are connected. Hence the group $\bG^0_\Omega$
is connected too. We know already that it is affine.

Finally, check that its unipotent radical $\mathbf U$ of $\bG^0_\Omega$ is trivial.
Since there is no non-trivial $\Omega$-group morphisms
$\mathbf U\to \bT^0_\Omega$, we conclude that $\mathbf U\subset \bG^{ss}_\Omega$. Since $\bG^{ss}_\Omega$
is semi-simple one has $\mathbf U=\{1\}$. This completes the proof of the reductivity of the $R$-group scheme $\bG^0$.
Thus, the $R$-group scheme $\bH$ is quasi-reductive. This proves the lemma.
\end{proof}

\begin{proof}[Proof of the first assertion of Theorem \ref{Aus_Buksbaum_3}]
Let $\bH$ be the kernel of $\mu$.
Since $\mu$ is smooth, the group scheme sequence
$$1 \to \bH \to \bG \to \bT \to 1$$
gives rise to an short exact sequence of group sheaves in the \'{e}tale topology.
In turn that sequence of sheaves induces a long exact sequence of pointed sets.
So, the boundary map
$\partial: \bT(R) \to \textrm{H}^1_{\text{\'et}}(R,\bH)$
fits in a commutative diagram
$$
\CD
\bT(R)/\mu (\bG(R)) @>>> \textrm{H}^1_{\text{\'et}}(R,\bH) \\
@VVV @VVV \\
\bT(K)/\mu (\bG(K)) @>>> \textrm{H}^1_{\text{\'et}}(K,\bH). \\
\endCD
$$
Clearly, the horizontal arrows have trivial kernels.
The right vertical arrow has trivial kernel by Lemma \ref{Kernel_G_to_T} and Theorem \ref{MainThm1}.
Thus the left vertical arrow has trivial kernel too. Since it is a group homomorphism,
it is injective.
\end{proof}

\section{Norms}
\label{SectNorms}
We recall here a construction from \cite{P1}.
%In sections \ref{SectNorms}, \ref{SectUnramifiedElements},
%\ref{SectSpecializationMaps}
%we prove results which will be used to prove Theorems
%the rest of the paper we prove few results which we refer to reducing Theorems
%\ref{TheoremA}, \ref{TheoremA1}
%and
%\ref{MainThmGeometric}.
%to Theorem \ref{equating3}.
Let $k\subset K\subset L$ be field extensions and assume that $L$
is finite separable over $K$. Let $K^{sep}$ be a separable closure
of $K$ and $\sigma_i:K\to K^{sep},\enspace 1\leq i\leq n$
the different embeddings of $K$ into $L$. Let $C$ be a $k$-smooth
commutative algebraic group scheme defined over $k$. One can define
a norm map
$${\mathcal N}_{L/K}:C(L)\to C(K)$$
by
${\mathcal N}_{L/K}(\alpha)=\prod_i C(\sigma_i)(\alpha) \in C(K^{sep})^{{\mathcal G}(K)} =C(K)\ .
$
In \cite{P1} following Suslin and Voevodsky
\cite[Sect.6]{SV} we
generalize this construction to finite flat ring extensions.
\smallskip
Let $p:X\to Y$ be a finite flat morphism of affine schemes.
Suppose that its rank is constant, equal to $d$. Denote by
$S^d(X/Y)$ the $d$-th symmetric power of $X$ over $Y$.

\smallskip
Let $k$ be a field. Let $\mathcal O$ be the semi-local ring of finitely many {\bf closed} points
on a smooth affine irreducible $k$-variety $X$.
Let $C$ be an affine smooth commutative $\mathcal O$-group scheme,
Let $p:X\to Y$ be a finite flat morphism of affine $\mathcal O$-schemes
and $f:X\to C$ any $\mathcal O$-morphism.
In \cite{P1} {\it the norm} $N_{X/Y}(f)$ of $f$ {\it is defined as the composite map}
\begin{equation}
\label{NormMap}
Y \xra{N_{X/Y}}
S^d(X/Y) \to S^d_{\mathcal O}(X) \xra{S^d_{\mathcal O}(f)} S^d_{\mathcal O}(C)\xra{\times} C
\end{equation}
Here we write $"\times"$ for the group law on $C$.
The norm maps $N_{X/Y}$ satisfy the following conditions
\begin{itemize}
\item[(i')]
Base change: for any map $f:Y'\to Y$ of affine schemes, putting
$X'=X\times_Y Y'$ we have a commutative diagram
$$
\begin{CD}
C(X)                  @>{(id \times f)^{*}}>>            C(X^{\prime})      \\
@V{N_{X/Y}}VV @VV{N_{X^{\prime}/Y^{\prime}}}V    \\
C(Y)                  @>{f^{*}}>>            C(Y^{\prime})
\end{CD}
$$
\item[(ii')]
multiplicativity: if $X=X_1 \amalg X_2$ then the diagram commutes
$$
\begin{CD}
C(X)                  @>{(id \times f)^{*}}>>            C(X_1) \times C(X_2)      \\
@V{N_{X/Y}}VV                              @VV{N_{X_1/Y}N_{X_2/Y}}V    \\
C(Y)                  @>{id}>>            C(Y)
\end{CD}
$$
\item[(iii')]
normalization: if $X=Y$ and the map $X \to Y$ is the identity then $N_{X/Y}=id_{C(X)}$.
\end{itemize}

\section{Unramified elements}
\label{SectUnramifiedElements}
Let $k$ be a field, $\mathcal O$ be the semi-local ring of finitely many closed points
on a $k$-smooth irreducible affine $k$-variety $X$.
%$\mathcal O$ be the $k$-algebra from Theorem
%\ref{Aus_Buksbaum}
Let $K$ be the fraction field of $\mathcal O$, that is $K=k(X)$.
Let
$$\mu: \bG \to \bT$$
be a smooth $\mathcal O$-morphism of reductive
$\mathcal O$-group schemes, with a torus $\bT$.
%Set $\bH= \ker (\mu)$.
%By Lemma \ref{One_Lemma}  it is
%a quasi-reductive $\mathcal O$-group scheme.
%Thus, it satisfies the conclusion of \ref{MainThm1}
%for each regular semi-local integral domain $R$ which is an $\mathcal O$-algebra.
%As an obvious consequence, the group homomorphism
%$$\bC(R)/\mu(\bG(R)) \to \bC(K)/\mu(\bG(K)),$$
%is injective, where $K$ is the fraction field of $R$.
%Set
%$H= ker (\mu)$ and suppose additionally that
%$H$
%Suppose additionally that
%the kernel of $\mu$ is a reductive $\mathcal O$-group scheme.
%Let
%$\mu: G\to C$
%be the morphism of reductive $\mathcal O$-group schemes from Theorem
%\ref{Aus_Buksbaum}.
We work in this section with {\it the category of commutative Noetherian $\mathcal O$-algebras}.
For a commutative $\mathcal O$-algebra $S$ set
\begin{equation}
\label{MainFunctor}
{\cal F}(S)=\bT(S)/\mu(\bG(S)).
\end{equation}
For an element $\alpha \in \bT(S)$ we will write $\bar \alpha$ for its
image in ${\cal F}(S)$. {\it In this section we will write  ${\cal F}$
for the functor }
(\ref{MainFunctor}).
The following result is a particular case of the first part of Theorem \ref{Aus_Buksbaum_3} (those part is proved in Section \ref{One_Lemma}).
\begin{thm}
\label{NisnevichCor}
Let $S$ be an $\mathcal O$-algebra which is discrete valuation ring with fraction field $L$.
Then the map
${\cal F}(S) \to {\cal F}(L)$
is injective.
\end{thm}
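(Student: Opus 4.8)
The plan is to deduce Theorem~\ref{NisnevichCor} directly from the first assertion of Theorem~\ref{Aus_Buksbaum_3}, which was established in Section~\ref{One_Lemma}. First I would check that $S$ satisfies the hypotheses of that theorem. A discrete valuation ring is regular, local (hence semi-local) and an integral domain; and since $S$ is an $\mathcal O$-algebra and $k\subset\mathcal O$, the composite $k\to\mathcal O\to S$ is injective (the target being a domain), so $S$ contains the field $k$. By assumption the fraction field of $S$ is $L$.

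Next I would base change the given smooth $\mathcal O$-morphism $\mu\colon\bG\to\bT$ along $\mathcal O\to S$. The resulting morphism $\mu_S\colon\bG_S\to\bT_S$ is again a morphism of reductive $S$-group schemes which is smooth as a scheme morphism, and $\bT_S$ is an $S$-torus. Since $\bG_S(S')=\bG(S')$ and $\bT_S(S')=\bT(S')$ for every $\mathcal O$-algebra $S'$, the functor $\mathcal F$ of (\ref{MainFunctor}) evaluated on $S$ and on $L$ is precisely $\mathcal F(S)=\bT_S(S)/\mu_S(\bG_S(S))$ and $\mathcal F(L)=\bT_S(L)/\mu_S(\bG_S(L))$. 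Applying the first assertion of Theorem~\ref{Aus_Buksbaum_3} with $R:=S$ and $K:=L$ then gives exactly the injectivity of $\mathcal F(S)\to\mathcal F(L)$.

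For completeness I would recall, in this specialized setting, the mechanism of that argument. By Lemma~\ref{Kernel_G_to_T} applied to $X=\Spec S$, the kernel $\bH$ of $\mu_S$ is a quasi-reductive $S$-group scheme, and $1\to\bH\to\bG_S\to\bT_S\to1$ is exact for the \'etale topology on $\Spec S$. The associated long exact sequences of pointed sets fit into a commutative square in which the horizontal maps are the boundary maps $\mathcal F(S)\to H^1_{\et}(S,\bH)$ and $\mathcal F(L)\to H^1_{\et}(L,\bH)$, each of which has trivial kernel, and the right vertical map $H^1_{\et}(S,\bH)\to H^1_{\et}(L,\bH)$ has trivial kernel by Theorem~\ref{MainThm1} (applicable since $S$ is a regular semi-local integral domain containing a field). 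A short diagram chase then shows that the left vertical map $\mathcal F(S)\to\mathcal F(L)$ has trivial kernel, and since it is a homomorphism of groups, it is injective.

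There is no essential obstacle in this proof, as it merely specializes results already proved; the only point demanding (routine) care is verifying that the reductivity, smoothness, torus, regularity and ``contains a field'' hypotheses of Theorem~\ref{Aus_Buksbaum_3} persist after base change along $\mathcal O\to S$.
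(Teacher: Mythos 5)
Your proposal is correct and is exactly the paper's argument: the paper simply declares Theorem~\ref{NisnevichCor} to be a particular case of the first assertion of Theorem~\ref{Aus_Buksbaum_3} applied to the regular (semi-)local domain $S$ with fraction field $L$, which is precisely your reduction. The additional verification of hypotheses after base change and the recap of the boundary-map diagram chase are consistent with Section~\ref{One_Lemma} and add nothing that conflicts with the paper.
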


\begin{lem}
\label{BoundaryInj}
Let $\mu: \bG \to \bT$ be the above morphism of our reductive group schemes.
Then for an $\mathcal O$-algebra $L$, where $L$ is a field, the boundary map
$\partial: \bT(L)/{\mu (\bG(L))} \to \textrm{H}^1_{\text{\'et}}(L,\bH)$
is injective.
\end{lem}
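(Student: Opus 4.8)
The plan is to exploit the exact sequence of \'etale sheaves $1\to\bH\to\bG\xrightarrow{\mu}\bT\to 1$ on $\Spec L$, which exists precisely because $\mu$ is smooth and hence surjective as a morphism of \'etale sheaves. The associated long exact sequence of pointed sets reads
\[
\bG(L)\xrightarrow{\mu}\bT(L)\xrightarrow{\partial}\textrm{H}^1_{\text{\'et}}(L,\bH)\to\textrm{H}^1_{\text{\'et}}(L,\bG).
\]
By the general theory of torsors, two elements $t_1,t_2\in\bT(L)$ have the same image under $\partial$ if and only if they lie in the same orbit of $\bG(L)$ acting on $\bT(L)$ by translation through $\mu$; since here the action is simply $t\mapsto\mu(g)t$, that orbit relation is exactly $t_1t_2^{-1}\in\mu(\bG(L))$. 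Thus $\partial$ factors through an \emph{injection} $\bT(L)/\mu(\bG(L))\hookrightarrow\textrm{H}^1_{\text{\'et}}(L,\bH)$.

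First I would recall, or cite from the standard reference on non-abelian cohomology (e.g. Serre, or Giraud), the precise statement that for a short exact sequence of group sheaves $1\to\bH\to\bG\to\bQ\to 1$ the fibres of $\partial:\bQ(L)\to\textrm{H}^1_{\text{\'et}}(L,\bH)$ are exactly the orbits of $\bG(L)$ on $\bQ(L)$. Next I would observe that in our situation $\bQ=\bT$ is \emph{abelian} and $\bG$ acts on $\bT$ through the homomorphism $\mu$ composed with left translation, so the orbit of $t\in\bT(L)$ is the coset $\mu(\bG(L))\cdot t$. Consequently $\partial(t_1)=\partial(t_2)$ forces $\bar t_1=\bar t_2$ in $\bT(L)/\mu(\bG(L))$, which is the asserted injectivity. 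Finally, since $\bT$ is commutative, $\bT(L)/\mu(\bG(L))$ is a genuine group and $\partial$ is a homomorphism into the pointed set $\textrm{H}^1_{\text{\'et}}(L,\bH)$, so "trivial kernel" and "injective" coincide and there is nothing further to check.

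I do not anticipate a serious obstacle here: the only point requiring a little care is verifying that $\mu\colon\bG(L)\to\bT(L)$ need not be surjective—this is exactly why $\bT(L)/\mu(\bG(L))$ can be nontrivial—while $\mu$ \emph{is} surjective as a map of \'etale sheaves because it is a smooth morphism of schemes, which is what makes the sheaf sequence exact on the right and legitimises the connecting map $\partial$. Everything else is the formal behaviour of the connecting map in non-abelian \'etale cohomology, so the proof is short.
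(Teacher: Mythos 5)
Your argument is correct and is essentially the paper's: the paper simply defers to \cite[Lemma 6.2]{Pan2}, which runs the same standard computation that the fibres of the connecting map $\partial$ for the exact sheaf sequence $1\to\bH\to\bG\xrightarrow{\mu}\bT\to1$ (exact on the right because $\mu$ is smooth) are the orbits of $\bG(L)$ acting on $\bT(L)$ by translation through $\mu$, i.e.\ the cosets of $\mu(\bG(L))$ since $\bT$ is abelian. No gap to report.
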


\begin{proof}
Repeat literally the proof of \cite[Lemma 6.2]{Pan2}.
\end{proof}

Let $k$, $\mathcal O$ and $K$ be as above in this Section.
Let $\mathcal K$ be a field containing $K$ and
$x: \mathcal K^* \to \mathbb Z$
be a discrete valuation vanishing on $K$.
Let $A_x$ be the valuation ring of $x$. Clearly,
$\mathcal O \subset A_x$.
Let
$\hat A_x$ and $\hat {\mathcal K}_x$
be the completions of $A_x$ and $\mathcal K$ with respect to $x$.
Let
$i:\mathcal K \hookrightarrow \hat {\mathcal K}_x$
be the inclusion. By Theorem
\ref{NisnevichCor}
the map
${\cal F}(\hat A_x)\to {\cal F}(\hat{\mathcal K}_x)$
is injective. We will identify
${\cal F}(\hat A_x)$
with its image under this map. Set
$$
{\cal F}_x(\mathcal K)=i_*^{-1}({\cal F}(\hat A_x)).
$$

The inclusion
$A_x\hookrightarrow \mathcal K$
induces a map
$
{\cal F}(A_x) \to {\cal F}(\mathcal K)
$
which is injective by Theorem
\ref{NisnevichCor}.
So both groups
${\cal F}(A_x)$ and ${\cal F}_x(\mathcal K)$
are subgroups of
${\cal F}(\mathcal K)$.
The following lemma shows that
${\cal F}_x(\mathcal K)$
coincides with the subgroup of $
{\cal F}(\mathcal K)$
consisting of all elements {\it unramified} at $x$.

\begin{lem}
\label{TwoUnramified}
${\cal F}(A_x)={\cal F}_x(\mathcal K)$.
\end{lem}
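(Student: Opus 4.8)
The plan is to recast the lemma as a descent statement. The inclusion $\mathcal F(A_x)\subseteq\mathcal F_x(\mathcal K)$ is immediate from the commutativity of the square of ring maps $A_x\to\mathcal K$, $A_x\to\hat A_x$, $\mathcal K\to\hat{\mathcal K}_x$, $\hat A_x\to\hat{\mathcal K}_x$: for $\alpha\in\bT(A_x)$ the image of $\bar\alpha$ in $\mathcal F(\mathcal K)$ has the same image in $\mathcal F(\hat{\mathcal K}_x)$ as the image of $\bar\alpha$ in $\mathcal F(\hat A_x)$. For the converse, observe that by Theorem~\ref{NisnevichCor} the maps $\mathcal F(\hat A_x)\to\mathcal F(\hat{\mathcal K}_x)$ and $\mathcal F(A_x)\to\mathcal F(\mathcal K)$ are injective; hence the fibre product $P:=\mathcal F(\mathcal K)\times_{\mathcal F(\hat{\mathcal K}_x)}\mathcal F(\hat A_x)$ maps injectively to $\mathcal F(\mathcal K)$ via the first projection, with image exactly $\mathcal F_x(\mathcal K)=i_*^{-1}(\mathcal F(\hat A_x))$, while the natural map $\mathcal F(A_x)\to P$ is injective as well (its composite with the projection to $\mathcal F(\mathcal K)$ is the injective restriction map). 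Therefore the lemma is equivalent to the surjectivity of $\mathcal F(A_x)\to P$, i.e. to the square
$$
\CD
\mathcal F(A_x) @>>> \mathcal F(\hat A_x)\\
@VVV @VVV\\
\mathcal F(\mathcal K) @>>> \mathcal F(\hat{\mathcal K}_x)
\endCD
$$
being cartesian.

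To prove this I would use two ingredients. The first is a \emph{gluing fact}: $A_x=\mathcal K\times_{\hat{\mathcal K}_x}\hat A_x$ as rings — indeed $A_x=\mathcal K\cap\hat A_x$ inside $\hat{\mathcal K}_x$ (the valuation $x$ and its extension to $\hat{\mathcal K}_x$ agree on $\mathcal K$) and $\mathcal K+\hat A_x=\hat{\mathcal K}_x$ because $\mathcal K$ is dense in its completion $\hat{\mathcal K}_x$ — so, $\bT$ and $\bG$ being affine $\mathcal O$-schemes, $\bT(A_x)=\bT(\mathcal K)\times_{\bT(\hat{\mathcal K}_x)}\bT(\hat A_x)$ and likewise for $\bG$. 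The second is a \emph{decomposition fact}: $\bG(\hat{\mathcal K}_x)=\bG(\mathcal K)\cdot\bG(\hat A_x)$, with $\bG(\mathcal K)$ and $\bG(\hat A_x)$ viewed inside $\bG(\hat{\mathcal K}_x)$ via $i\colon\mathcal K\hookrightarrow\hat{\mathcal K}_x$ and $j\colon\hat A_x\hookrightarrow\hat{\mathcal K}_x$. Granting both, take an element of $P$ represented by $\alpha_1\in\bT(\mathcal K)$ and $\alpha_2\in\bT(\hat A_x)$, so that $i_*\alpha_1\cdot(j_*\alpha_2)^{-1}=\mu(\hat g)$ for some $\hat g\in\bG(\hat{\mathcal K}_x)$; write $\hat g=i_*(h_1)\cdot j_*(h_2)$ with $h_1\in\bG(\mathcal K)$, $h_2\in\bG(\hat A_x)$. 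Since $\mu$ is an $\mathcal O$-homomorphism and $\bT$ is commutative, one gets $i_*\bigl(\alpha_1\mu(h_1)^{-1}\bigr)=j_*\bigl(\alpha_2\mu(h_2)\bigr)$ in $\bT(\hat{\mathcal K}_x)$, so by the gluing fact there is $\alpha\in\bT(A_x)$ restricting to $\alpha_1\mu(h_1)^{-1}$ over $\mathcal K$ and to $\alpha_2\mu(h_2)$ over $\hat A_x$; then $\bar\alpha\in\mathcal F(A_x)$ maps to the given element of $P$. Hence the square is cartesian and the lemma follows.

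The only point needing real work is the decomposition fact, which I expect to be the main obstacle. Fixing a closed $\mathcal O$-embedding $\bG\hookrightarrow\Aff^N$, the subgroup $\bG(\hat A_x)=\bG(\hat{\mathcal K}_x)\cap\hat A_x^{\,N}$ is open in the topological group $\bG(\hat{\mathcal K}_x)$ for the $x$-adic topology, so (a dense subgroup meets every coset of an open subgroup) it suffices that $\bG(\mathcal K)$ be dense in $\bG(\hat{\mathcal K}_x)$. This is the (known) weak approximation property of $\bG_{\mathcal K}$ at the single place $x$: it holds because $\bG_{\mathcal K}$ is a connected reductive group — hence a unirational $\mathcal K$-variety possessing a $\mathcal K$-point — and can be checked directly from the big-cell decomposition $U^-\cdot T\cdot U^+\subseteq\bG_{\mathcal K}$, the unipotent parts being affine spaces over $\mathcal K$ and the maximal torus $T$ being handled via a quasi-trivial (``flasque'') resolution, together with the density of $\mathcal K$ in $\hat{\mathcal K}_x$. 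Everything else in the argument is formal.
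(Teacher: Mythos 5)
Your overall strategy is the right one, and in fact the paper does not give its own argument here --- it simply says ``repeat literally the proof of \cite[Lemma 6.3]{Pan2}'', and that proof is of the same patching type as yours. Your formal reduction is sound: the inclusion $\mathcal F(A_x)\subseteq\mathcal F_x(\mathcal K)$ is immediate, $A_x=\mathcal K\cap\hat A_x$ inside $\hat{\mathcal K}_x$ does give $\bT(A_x)=\bT(\mathcal K)\times_{\bT(\hat{\mathcal K}_x)}\bT(\hat A_x)$ for the affine scheme $\bT$, and granting the decomposition $\bG(\hat{\mathcal K}_x)=\bG(\mathcal K)\cdot\bG(\hat A_x)$ the diagram chase producing $\alpha\in\bT(A_x)$ is correct.

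The genuine gap is exactly where you suspected it: your justification of the decomposition fact. Reducing it to the density of $\bG(\mathcal K)$ in $\bG(\hat{\mathcal K}_x)$ and asserting this as ``weak approximation at one place, because $\bG_{\mathcal K}$ is unirational with a rational point'' does not work. Unirationality (even with a rational point) does not imply density of rational points in the completion, and weak approximation at a single discrete valuation is not a theorem for arbitrary connected reductive groups over arbitrary fields; already for tori the big-cell/flasque-resolution route you sketch only shows that the closure of $T(\mathcal K)$ contains the open subgroup $\mathrm{im}\,(P(\hat{\mathcal K}_x)\to T(\hat{\mathcal K}_x))$, and the remaining quotient is controlled by $H^1(\hat{\mathcal K}_x,S)$ for the flasque kernel $S$, with no a priori surjectivity from $T(\mathcal K)$. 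What rescues the statement is a hypothesis your argument never uses: $\bG$ is defined over $\mathcal O\subseteq A_x$, hence is a \emph{reductive group scheme over the discrete valuation ring} $A_x$. For such $\bG$ the equality $\bG(\hat{\mathcal K}_x)=\bG(\mathcal K)\cdot\bG(\hat A_x)$ is a known theorem of Nisnevich type: by Beauville--Laszlo gluing of (affine) torsors, the double coset set $\bG(\mathcal K)\backslash\bG(\hat{\mathcal K}_x)/\bG(\hat A_x)$ classifies $\bG$-torsors over $A_x$ trivial over $\mathcal K$ and over $\hat A_x$, and these are trivial by the Grothendieck--Serre theorem for the DVR $A_x$ (available here since $A_x$ contains a field, \cite{Ni}, \cite{FP}, \cite{Pan3}). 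So you should replace the weak-approximation claim by this citation (or by the corresponding lemma in \cite{Pan2}); with that substitution the proof closes.
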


\begin{proof}
Repeat literally the proof of \cite[Lemma 6.3]{Pan2}.
\end{proof}
Let $S$ be an $\mathcal O$-algebra which is an integral domain and suppose $S$ is a regular ring.
Let $L$ be its fraction field.
For each height $1$ prime ideals $\mathfrak p$ in $S$ the group map
${\cal F}(S_{\mathfrak p})\to {\cal F}(L)$ is injective by the first part of Theorem \ref{Aus_Buksbaum_3}.
Define the {\it subgroup of $S$-unramified elements of $\mathcal F (L)$} as
\begin{equation}
\label{DefnUnramified}
\mathcal F_{nr,S}(L)=
\bigcap_{\mathfrak p \in Spec(S)^{(1)}} \mathcal F(S_{\mathfrak p}) \subseteq {\cal F}(L),
\end{equation}
where $Spec(S)^{(1)}$ is the set of height $1$ prime ideals in $S$.
Obviously the image of $\mathcal F(S)$ in $\mathcal F(L)$ is contained in
$\mathcal F_{nr,S}(L)$.
%In some cases
%$\mathcal F(S_{\mathfrak p})$
%injects into
%$\mathcal F(L)$
%and
%$\mathcal F_{nr,S}(L)$
%is simply the intersection of all
%$\mathcal F(S_{\mathfrak p})$.
For
%a regular domain $S$ with the fraction field $\cal K$
%and
each height one prime $\mathfrak p$ in $S$
we construct {\it specialization maps}
$s_{\mathfrak p}: {\cal F}_{nr, S}(L) \to {\cal F} (l(\mathfrak p))$,
where $L$ is the field of fractions of $S$ and
$l(\mathfrak p)$
is the residue field of
$S$ at the prime $\mathfrak p$.

\begin{defn}
\label{SpecializationDef}
Let
$Ev_{\mathfrak p}: \bT(S_{\mathfrak p}) \to \bT(l(\mathfrak p))$
and
$ev_{\mathfrak p}: {\cal F}(S_{\mathfrak p}) \to {\cal F}(K(\mathfrak p))$
be the maps induced by the canonical $S$-algebra homomorphism
$S_{\mathfrak p} \to l(\mathfrak p)$.
Define a homomorphism
$s_{\mathfrak p}: {\cal F}_{nr, S}(L) \to {\cal F} (l(\mathfrak p))$
by
$s_{\mathfrak p}(\alpha)= ev_{\mathfrak p}(\tilde \alpha)$,
where
$\tilde \alpha$
is a lift of $\alpha$ to
${\cal F}(S_{\mathfrak p})$.
Theorem
\ref{NisnevichCor}
shows that the map $s_{\mathfrak p}$ is well-defined.
It is called the specialization map. The map $ev_{\mathfrak p}$ is called the evaluation
map at the prime $\mathfrak p$.

Obviously for
$\alpha \in \bT(S_\mathfrak p)$
one has
$s_{\mathfrak p}(\bar \alpha)=\overline {Ev_{\mathfrak p}(\alpha)}
\in {\cal F}(l(\mathfrak p))$.
\end{defn}

Let $k$, $\mathcal O$ and $K$ be as above in this Section.
The following two results are proved using literally the same arguments as in the proof of \cite[Thm. 6.5]{Pan2} and \cite[Cor. 6.6]{Pan2} respectively.
%We need the following theorem and its corollary.
\begin{thm}[Homotopy invariance]
\label{HomInvNonram}
Let $K(t)$ be the rational function field in one variable over the field $K$.
%Let $S \mapsto {\cal F}(S)$ be the functor defined by the formulae
%(\ref{MainFunctor2})
Define ${\cal F}_{nr,K[t]}(K(t))$ by the formulae
(\ref{DefnUnramified}).
Then one has an equality
$$
{\cal F}(K)={\cal F}_{nr,K[t]}(K(t)).
$$
\end{thm}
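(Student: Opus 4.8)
The plan is to prove the two inclusions separately; write $\iota\colon\mathcal{F}(K)\to\mathcal{F}(K(t))$ for the map in the statement. Two soft observations reduce everything to an ``$\Aff^1_K$-purity'' assertion for $\mathcal{F}$. First, for any $K$-torus $\bT$ one has $\bT(K[t])=\bT(K)$: writing $\bT=\Spec K[M]$ for its character module $M$ one gets $\bT(K[t])=\Hom(M,K[t]^{\times})=\Hom(M,K^{\times})=\bT(K)$, because $K$ is a field. Hence $\mathcal{F}(K)\to\mathcal{F}(K[t])$ is onto, and the images of $\mathcal{F}(K)$ and of $\mathcal{F}(K[t])$ inside $\mathcal{F}(K(t))$ coincide. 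Second, $\iota$ is injective: it factors as $\mathcal{F}(K)\to\mathcal{F}(K[t]_{(t)})\to\mathcal{F}(K(t))$, where the right arrow is injective by Theorem \ref{NisnevichCor} and the left arrow is injective because evaluation at $t=0$ is a one-sided inverse already on the level of $\bG$ and $\bT$ (if $\mu(g)=\alpha$ with $\alpha\in\bT(K)$, $g\in\bG(K[t]_{(t)})$, then $\mu(g(0))=\alpha$ with $g(0)\in\bG(K)$). Since a constant section of $\bT$ is unramified at every height one prime of $K[t]$, the image of $\iota$ lies in $\mathcal{F}_{nr,K[t]}(K(t))$. Thus it remains to establish the reverse inclusion, i.e.\ that every $\xi\in\mathcal{F}_{nr,K[t]}(K(t))$ lies in the image of $\mathcal{F}(K[t])\to\mathcal{F}(K(t))$ --- equivalently, that $\xi$ is the class of a constant section of $\bT$; and once this is known, applying the specialization $s_0$ at a $K$-point $0$ where $\xi$ is defined (Definition \ref{SpecializationDef}) identifies that constant as $s_0(\xi)\in\mathcal{F}(K)$.

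For this reverse inclusion I would carry out verbatim the transfer argument of \cite[Thm.~6.5]{Pan2}; its ingredients, all available here, are the injectivity statements of Theorem \ref{NisnevichCor}, the first part of Theorem \ref{Aus_Buksbaum_3}, Lemma \ref{TwoUnramified}, Lemma \ref{BoundaryInj}, together with the norm maps of Section \ref{SectNorms}. Represent $\xi$ by $\tau\in\bT(K[t]_f)$ with $f$ monic of degree $d$; after translating $t$ we may assume $f(0)\neq 0$ (if $K$ is a finite field too small to permit such a translation, one first passes to finite extensions $K'/K$ of pairwise coprime degrees, argues over each, and descends via the norm maps $N_{K'/K}$ of Section \ref{SectNorms}, using their base change, multiplicativity and normalization properties). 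One then induces on $d$. The base case $d=0$ is immediate, since then $\tau\in\bT(K[t])=\bT(K)$ is already constant. For $d\geq 1$ one removes the ramification locus $V(f)$ one closed point at a time: for $\mathfrak p\in V(f)$ with residue field $L=l(\mathfrak p)$, base change along the finite flat morphism $\Spec L[t]\to\Spec K[t]$, over which $\mathfrak p$ acquires an $L$-rational point and over which the pulled-back class $\xi_L\in\mathcal{F}_{nr,L[t]}(L(t))$ is still unramified; peeling off the $L$-rational ramification points (using the injectivity of $\mathcal{F}$ on the relevant discrete valuation rings to perform the necessary modifications by elements of $\mu(\bG(L(t)))$) and applying the inductive hypothesis over $L$, one returns to $K$ by the norm $N_{\Spec L[t]/\Spec K[t]}$ --- which descends from $\bT$ to $\mathcal{F}$ via the norm principle for $\mu$ --- together with the projection formula $N_{L(t)/K(t)}\circ\mathrm{res}=(\,\cdot\,)^{[L:K]}$, a coprimality argument to remove the resulting degree factor, and (where needed) a residue reciprocity for $\mathcal{F}$ on $\Aff^1_K$ assembled from the same norm maps.

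The step I expect to be the main obstacle is precisely this last piece of bookkeeping: checking that the norm maps of Section \ref{SectNorms} are compatible with restriction to $\Spec K[t]$ and with the specialization maps of Definition \ref{SpecializationDef}, that the norm of a class unramified on $\Spec L[t]$ is again unramified (so the induction stays inside $\mathcal{F}_{nr}$), and --- the genuinely delicate point --- that $N_{\Spec L[t]/\Spec K[t]}$ descends from the torus $\bT$ to the quotient functor $\mathcal{F}$, i.e.\ that it carries $\mu(\bG(L[t]_g))$ into $\mu(\bG(K[t]_{N(g)}))$. This is the norm principle for the smooth morphism $\mu\colon\bG\to\bT$; it (or, alternatively, a substitute obtained by transporting the whole argument through Lemma \ref{BoundaryInj} to the \'etale cohomology of $\bH$ and using known norm maps there) is what the induction rests on. Granting these formal properties, the argument is mechanical and coincides with that of \cite[Thm.~6.5]{Pan2}.
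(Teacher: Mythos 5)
Your easy direction is fine: $\bT(K[t])=\bT(K)$ (by \'etale descent from a splitting extension; note that your formula $\bT(K[t])=\Hom(M,K[t]^{\times})$ is literally correct only for split tori, but the conclusion holds in general), the factorization of $\iota$ through $\mathcal{F}(K[t]_{(t)})$ together with Theorem \ref{NisnevichCor} and the retraction $t\mapsto 0$ gives injectivity, and constants are visibly unramified. Citing \cite[Thm.~6.5]{Pan2} for the hard direction is also consistent with the paper, whose entire proof of Theorem \ref{HomInvNonram} is the sentence that it is obtained by repeating the argument of \cite[Thm.~6.5]{Pan2} verbatim.

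The problem is the content you put behind that citation. Your induction hinges on the norm $N_{L(t)/K(t)}$ descending from $\bT$ to the quotient functor $\mathcal{F}=\bT/\mu(\bG)$, i.e.\ on the norm principle $N_{L/K}(\mu(\bG(L)))\subseteq\mu(\bG(K))$ for the smooth morphism $\mu$. You flag this yourself as ``the genuinely delicate point'' and then grant it as a ``formal property''; it is not formal. It is a substantive theorem known (after Merkurjev and Barquero--Merkurjev) only when the derived group of $\bG$ is of classical type, and it is not available anywhere in this paper, which treats an arbitrary reductive $\bG$. Indeed, the paper's norm machinery is built precisely to sidestep this: the norms of Section \ref{SectNorms} are defined only for a \emph{commutative} group scheme $C$, and Lemma \ref{KeyUnramifiedness} applies $N_{F/E}$ to an \emph{element} $\alpha\in\bT(A_f)$ and asserts only that the class of $N_{F/E}(\alpha)$ is again unramified --- it never asserts that $N$ is well defined on $\mathcal{F}$-classes. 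Moreover, in the paper's logical order the norm maps and Lemma \ref{KeyUnramifiedness} are consumed \emph{downstream} of homotopy invariance (in the purity argument of Section \ref{purity}, together with Corollary \ref{TwoSpecializations}, which is itself a corollary of Theorem \ref{HomInvNonram}); so a transfer/reciprocity argument for $\mathcal{F}$ on $\Aff^1_K$ is not what the cited proof does, and your appeal to ``residue reciprocity for $\mathcal{F}$'' would have to be built from the same unproven norm principle. Finally, even granting the norm principle, your plan to ``remove the resulting degree factor by a coprimality argument'' needs $\mathcal{F}(K(t))$ modulo constants to be torsion, or two auxiliary extensions of genuinely coprime degrees; $\mathcal{F}$ is not a torsion functor here, so this step also does not come for free. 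In short: the reduction to an $\Aff^1_K$-statement is correct, but the proposed mechanism for that statement rests on an unproven (and in general unknown) norm principle, so the proof has a genuine gap.
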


\begin{cor}
\label{TwoSpecializations}
%Let $S \mapsto {\cal F}(S)$ be the functor defined in
%(\ref{MainFunctor}).
Let
$$
s_0, s_1: {\cal F}_{nr, K[t]}(K(t)) \to {\cal F}(K)
$$
be the specialization maps at zero and at one
(at the primes (t) and (t-1)). Then $s_0=s_1$.
\end{cor}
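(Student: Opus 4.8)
The plan is to deduce $s_0 = s_1$ directly from the homotopy invariance statement of Theorem~\ref{HomInvNonram}, by checking that \emph{both} $s_0$ and $s_1$ coincide with the inverse of the identification $\mathcal F(K) = \mathcal F_{nr,K[t]}(K(t))$ furnished by that theorem. Since both specialization maps are group homomorphisms (Definition~\ref{SpecializationDef}), it suffices to evaluate them on an arbitrary element.

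First I would fix $\xi \in \mathcal F_{nr,K[t]}(K(t))$. By Theorem~\ref{HomInvNonram}, $\mathcal F_{nr,K[t]}(K(t)) = \mathcal F(K)$ as subgroups of $\mathcal F(K(t))$, so there is a unique class $\bar\alpha \in \mathcal F(K)$, with $\alpha \in \bT(K)$, whose image in $\mathcal F(K(t))$ (along $K \hookrightarrow K[t] \hookrightarrow K(t)$) equals $\xi$. Next I would exhibit an explicit lift of $\xi$ in $\mathcal F(K[t]_{(t)})$: regard $\alpha$ as an element of $\bT(K[t])$ via $K \to K[t]$, and let $\alpha_0 \in \bT(K[t]_{(t)})$ be its further image. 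Then $\bar\alpha_0 \in \mathcal F(K[t]_{(t)})$ maps to $\xi$ in $\mathcal F(K(t))$, so by the uniqueness of lifts built into Definition~\ref{SpecializationDef} (a consequence of the injectivity of $\mathcal F(K[t]_{(t)}) \to \mathcal F(K(t))$ from Theorem~\ref{NisnevichCor}) it is the lift $\tilde\xi$ used to compute $s_0$. Hence $s_0(\xi) = ev_{(t)}(\bar\alpha_0) = \overline{Ev_{(t)}(\alpha_0)}$. Now $Ev_{(t)}(\alpha_0)$ is the image of $\alpha \in \bT(K[t])$ under $\bT(K[t]) \to \bT(l((t)))$, and the ring map $K[t] \to l((t)) = K$ is evaluation at $t = 0$, which restricts to the identity on the constant subring $K$; therefore $Ev_{(t)}(\alpha_0) = \alpha$ and $s_0(\xi) = \bar\alpha$. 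Repeating the computation at the prime $(t-1)$ — whose residue field is again $K$ and whose structure map $K[t] \to K$ is evaluation at $t = 1$, still the identity on $K$ — gives $s_1(\xi) = \bar\alpha$. Thus $s_0(\xi) = \bar\alpha = s_1(\xi)$ for every $\xi$, proving the corollary.

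I do not expect a genuine obstacle here: the entire content of the corollary is carried by Theorem~\ref{HomInvNonram}. The only points requiring a little care are that the lift appearing in the definition of $s_{\mathfrak p}$ is genuinely unique, so that the constant lifts constructed above are the ones actually used, and that the evaluation maps $Ev_{\mathfrak p}$ are compatible with the inclusion $K \hookrightarrow K[t]$; both follow from the injectivity in Theorem~\ref{NisnevichCor} and from the functoriality of $\bT$, $\mathcal F$ and of the norm/specialization formalism set up in Sections~\ref{SectNorms}--\ref{SectUnramifiedElements}.
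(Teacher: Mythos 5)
Your argument is correct and is essentially the paper's own: the paper proves this corollary by repeating the argument of \cite[Cor.~6.6]{Pan2}, which is exactly your deduction from Theorem~\ref{HomInvNonram} that every unramified class is constant and that evaluating a constant lift at $t=0$ or $t=1$ returns the same element of ${\cal F}(K)$. No gaps; the only care needed (uniqueness of the lift in $\mathcal F(K[t]_{(t)})$ via Theorem~\ref{NisnevichCor}) is handled as you indicate.
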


%The following result is \cite[Lemma 4.0.11]{Pa2}
%up to change of notation.
\begin{lem}
\label{KeyUnramifiedness}
Let $B \subset A$ be a finite extension of $K$-smooth algebras, which are integral domains
and each has dimension one.
Let $0 \neq f \in A$ and
let $h \in B\cap fA$ be such that the induced map
$B/hB\to A/fA$ is an isomorphism.
Suppose
$hA=fA\cap J^{\prime\prime}$
for an ideal
$J^{\prime\prime} \subseteq A$
co-prime to the principal ideal $fA$.

%Suppose $N_{B/A}(f)=fg \in B$ for a certain $g \in B$ coprime
%with $f$. Suppose the composite map
%$A/N(f)A \to B/N(f)B \to B/fB$
%is an isomorphism.
Let $E$ and $F$ be the field of fractions of $B$ and $A$ respectively.
Let $\alpha \in \bT(A_f)$ be such that
$\bar \alpha \in {\cal F}(F)$
is $A$-unramified. Then, for
$\beta= N_{F/E}(\alpha)$,
the class
$\bar \beta \in {\cal F}(E)$
is $B$-unramified.
\end{lem}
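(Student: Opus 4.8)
The plan is to fix a height one prime $\mathfrak q$ of $B$ and to prove that the class $\bar\beta$ lies in the subgroup $\mathcal F(B_{\mathfrak q})\subseteq\mathcal F(E)$; here $B_{\mathfrak q}$ is a discrete valuation ring because $B$ is $K$-smooth of dimension one, and the inclusion $\mathcal F(B_{\mathfrak q})\hookrightarrow\mathcal F(E)$ is the first part of Theorem \ref{Aus_Buksbaum_3}. Running over all $\mathfrak q$ this yields $\bar\beta\in\mathcal F_{nr,B}(E)$, which is the assertion. Since the argument is governed by the finite morphism $\pi:\Spec A\to\Spec B$ above $\mathfrak q$, I would first read off the structure of $\pi^{-1}(\mathfrak q)$ from the hypotheses. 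The ring isomorphism $B/hB\isom A/fA$ makes $A/fA$ isomorphic to $B/hB$ as a $B$-module, so $\Supp_B(A/fA)=V(hB)$; hence if $h\notin\mathfrak q$ then $f$ is a unit in the semilocal ring $A\otimes_B B_{\mathfrak q}$, while if $h\in\mathfrak q$ then, using moreover that $hA=fA\cap J''$ with $J''+fA=A$, exactly one prime $\mathfrak p$ of $A$ over $\mathfrak q$ contains $f$; this $\mathfrak p$ is a height one prime with $A/\mathfrak p=B/\mathfrak q$, while every other prime of $A$ over $\mathfrak q$ contains $J''$ and is disjoint from $fA$ (in either case a prime of $A$ over $\mathfrak q$ avoids $h$ exactly when $\mathfrak q$ does).

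In the case $h\notin\mathfrak q$ the element $f$ is a unit of the ring $A\otimes_B B_{\mathfrak q}$, which is finite free of rank $d:=[F:E]$ over the discrete valuation ring $B_{\mathfrak q}$; thus $\alpha$ determines a point of $\bT(A\otimes_B B_{\mathfrak q})$, and $N_{(A\otimes_B B_{\mathfrak q})/B_{\mathfrak q}}(\alpha)\in\bT(B_{\mathfrak q})$ has image $N_{F/E}(\alpha)=\beta$ in $\bT(E)$ by the base change property (i') of Section \ref{SectNorms}. Hence $\bar\beta\in\mathcal F(B_{\mathfrak q})$, so $\bar\beta$ is already unramified at every height one prime of $B$ not containing $h$.

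In the case $h\in\mathfrak q$ I would pass to completions. Writing $\mathfrak p=\mathfrak q_1,\mathfrak q_2,\dots,\mathfrak q_m$ for the primes of $A$ over $\mathfrak q$, the ring $\widehat B_{\mathfrak q}$ is a complete discrete valuation ring and $A\otimes_B\widehat B_{\mathfrak q}=\prod_{i=1}^{m}\widehat A_{\mathfrak q_i}$ is finite free over it, with $\widehat A_{\mathfrak p}/\widehat B_{\mathfrak q}$ totally ramified (trivial residue field extension). By the multiplicativity property (ii') the norm $N_{(A\otimes_B\widehat B_{\mathfrak q})/\widehat B_{\mathfrak q}}$ is the product of the maps $N_{\widehat A_{\mathfrak q_i}/\widehat B_{\mathfrak q}}$. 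For $i\ge 2$ the element $f$ is a unit in $\widehat A_{\mathfrak q_i}$, so $\alpha$ gives a point there and $N_{\widehat A_{\mathfrak q_i}/\widehat B_{\mathfrak q}}(\alpha)\in\bT(\widehat B_{\mathfrak q})$. For the factor at $\mathfrak p$, the hypothesis that $\bar\alpha$ is $A$-unramified gives $\bar\alpha\in\mathcal F(A_{\mathfrak p})$ (Theorem \ref{NisnevichCor} for the discrete valuation ring $A_{\mathfrak p}$), hence there is $\alpha^{(1)}\in\bT(A_{\mathfrak p})$ with $\overline{\alpha^{(1)}}=\bar\alpha$ in $\mathcal F(F)$, so $\alpha=\mu(g)\,\alpha^{(1)}$ in $\bT(\widehat F_{\mathfrak p})$ for some $g\in\bG(\widehat F_{\mathfrak p})$ (where $\widehat F_{\mathfrak p}$ is the fraction field of $\widehat A_{\mathfrak p}$), and $N_{\widehat A_{\mathfrak p}/\widehat B_{\mathfrak q}}(\alpha^{(1)})\in\bT(\widehat B_{\mathfrak q})$. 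Putting $\gamma:=N_{\widehat A_{\mathfrak p}/\widehat B_{\mathfrak q}}(\alpha^{(1)})\cdot\prod_{i\ge 2}N_{\widehat A_{\mathfrak q_i}/\widehat B_{\mathfrak q}}(\alpha)\in\bT(\widehat B_{\mathfrak q})$ and computing the image of $\beta$ in $\bT(\widehat E_{\mathfrak q})$ via (i') together with $F\otimes_E\widehat E_{\mathfrak q}=\prod_i\widehat F_{\mathfrak q_i}$, one gets $\beta=\gamma\cdot N_{\widehat F_{\mathfrak p}/\widehat E_{\mathfrak q}}(\mu(g))$ in $\bT(\widehat E_{\mathfrak q})$. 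By the norm principle for the finite field extension $\widehat F_{\mathfrak p}/\widehat E_{\mathfrak q}$, i.e. $N_{\widehat F_{\mathfrak p}/\widehat E_{\mathfrak q}}\bigl(\mu(\bG(\widehat F_{\mathfrak p}))\bigr)\subseteq\mu(\bG(\widehat E_{\mathfrak q}))$ — which follows from Merkurjev's norm principle for the projection of $\bG$ onto its coradical (through which $\mu$ factors, as in the proof of Lemma \ref{Kernel_G_to_T}) and from the functoriality of the norm for morphisms of tori — we obtain $\bar\beta=\bar\gamma$ in $\mathcal F(\widehat E_{\mathfrak q})$; since $\bar\gamma\in\mathcal F(\widehat B_{\mathfrak q})$, so is the image of $\bar\beta$, and by Lemma \ref{TwoUnramified} (with $\mathcal K=E$ and $x$ the $\mathfrak q$-adic valuation of $E$, whose valuation ring is $B_{\mathfrak q}$) this is equivalent to $\bar\beta\in\mathcal F(B_{\mathfrak q})$.

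The step I expect to be the main obstacle is the case $h\in\mathfrak q$: there $\alpha$ has a genuine pole along the prime $\mathfrak p$ of $A$, and making the norm computation go through forces one to use simultaneously the transversality hypotheses $B/hB\cong A/fA$ and $hA=fA\cap J''$ (which single out $\mathfrak p$ and make the residue extension $A/\mathfrak p=B/\mathfrak q$ trivial), the $A$-unramifiedness of $\bar\alpha$ (to replace $\alpha$ near $\mathfrak p$ by the integral representative $\alpha^{(1)}$ of the same $\mathcal F$-class), and the norm principle (to push the resulting equality of classes through the norm map). Everything else reduces to bookkeeping with the base change and multiplicativity of the norm recalled in Section \ref{SectNorms}; in particular, if one has at one's disposal a purity statement for $\mathcal F$ over the semilocal Dedekind ring $A\otimes_B B_{\mathfrak q}$, the passage to completions in the third paragraph can be avoided, but the role of the norm principle is unchanged.
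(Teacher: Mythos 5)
Your overall architecture is the right one and coincides with the argument that the paper imports by reference to \cite[Lemma 6.7]{Pan2}: fix a maximal ideal $\mathfrak q$ of $B$, treat separately the primes with $h\notin\mathfrak q$ (where $f$ becomes invertible in $A\otimes_B B_{\mathfrak q}$ and the base-change property (i') finishes the job) and those with $h\in\mathfrak q$ (where one completes, decomposes $A\otimes_B\widehat B_{\mathfrak q}$ into its local factors and uses multiplicativity (ii')), then conclude via Lemma \ref{TwoUnramified}. The first case and all of the bookkeeping are fine.

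In the case $h\in\mathfrak q$, however, there is a genuine gap: you invoke the norm principle $N_{\widehat F_{\mathfrak p}/\widehat E_{\mathfrak q}}\bigl(\mu(\bG(\widehat F_{\mathfrak p}))\bigr)\subseteq\mu(\bG(\widehat E_{\mathfrak q}))$ for an arbitrary reductive group $\bG$. Merkurjev's norm principle for the map to the coradical is not known in this generality (it is established for groups whose semisimple part is of classical type, and for quasi-split groups, but is open in general, e.g.\ for trialitarian $D_4$ and some exceptional types), so as written your proof does not cover all $\bG$ admitted by the lemma. The point you miss --- and the reason the statement carries the hypotheses $B/hB\cong A/fA$ and $hA=fA\cap J''$ in exactly this form --- is that they force the completed extension at the bad prime to be \emph{trivial}, not merely of trivial residue field extension. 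Indeed, localizing the isomorphism $B/hB\to A/fA$ at $\mathfrak q$ gives $B_{\mathfrak q}/hB_{\mathfrak q}\cong A_{\mathfrak p}/fA_{\mathfrak p}$, while $J''A_{\mathfrak p}=A_{\mathfrak p}$ gives $hA_{\mathfrak p}=fA_{\mathfrak p}$; writing $w,v$ for the normalized valuations of $B_{\mathfrak q}$, $A_{\mathfrak p}$ and comparing lengths over $B_{\mathfrak q}$ (using $A/\mathfrak p=B/\mathfrak q$) yields $w(h)=v(f)=v(h)=e\cdot w(h)$, so the ramification index $e$ is $1$ and $\widehat B_{\mathfrak q}\to\widehat A_{\mathfrak p}$ is an isomorphism. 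Hence $N_{\widehat F_{\mathfrak p}/\widehat E_{\mathfrak q}}$ is the identity, the factor of $\beta$ at $\mathfrak p$ is just $\alpha$ itself regarded in $\bT(\widehat E_{\mathfrak q})$, and its class lies in ${\cal F}(\widehat A_{\mathfrak p})={\cal F}(\widehat B_{\mathfrak q})$ because $\bar\alpha$ is unramified at $\mathfrak p$; no norm principle is needed. With this correction your argument closes.
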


\begin{proof}
Repeat literally the proof of \cite[Lemma 6.7]{Pan2}.
\end{proof}

\section{Few recollections}
\label{First Application}
Let $X$ be an affine $k$-smooth irreducible $k$-variety, and let $x_1,x_2,\dots,x_n$ be closed points in $X$.
Let $\mathcal O$ be the semi-local ring $\mathcal O_{X,\{x_1,x_2,\dots,x_n\}}$.
Let $U=Spec(\mathcal O)$ and $can: U\hra X$ be the canonical embedding.
Let $\bG$ be a reductive
$X$-group scheme
and let
$\bG_U= can^*(\bG)$
be the pull-back of $\bG$ to $U$.
Let $\bT$ be an $X$-torus
and let
$\bT_U= can^*(\bT)$
be the pull-back of $\bG$ to $U$.
Let $\mu: \bG \to \bT$
be an $X$-group scheme morphism which is smooth as an $X$-scheme morphism.
Let $\mu_U=can^*(\mu)$.
The following result is \cite[Theorem 4.1]{Pan2}.
%\cite[Theorem 6.1]{Pan1}.

\begin{thm}
\label{equating3}
Given a non-zero function $\emph{f}\in k[X]$ vanishing at each point $x_i$,
there is a diagram of the form
\begin{equation}
\label{DeformationDiagram0}
    \xymatrix{
\Aff^1 \times U\ar[drr]_{\pr_U}&&\mathcal X^{\prime} \ar[d]^{}
\ar[ll]_{\sigma}\ar[d]_{q_U}
\ar[rr]^{q_X}&&X &\\
&&U \ar[urr]_{\can}\ar@/_0.8pc/[u]_{\Delta^{\prime}} &\\
    }
\end{equation}
with an irreducible affine scheme $\mathcal X^{\prime}$, a smooth morphism $q_U$, a finite surjective $U$-morphism $\sigma$ and an essentially smooth morphism $q_X$,
and a function $f^{\prime} \in q^*_X(\emph{f} \ )k[\mathcal X^{\prime}]$,
which enjoys the following properties:
\begin{itemize}
\item[\rm{(a)}]
if
$\mathcal Z^{\prime}$ is the closed subscheme of $\mathcal X^{\prime}$ defined by the ideal
$(f^{\prime})$, then the morphism
$\sigma|_{\mathcal Z^{\prime}}: \mathcal Z^{\prime} \to \Aff^1\times U$
is a closed embedding and the morphism
$q_U|_{\mathcal Z^{\prime}}: \mathcal Z^{\prime} \to U$ is finite;
\item[\rm{(a')}] $q_U\circ \Delta^{\prime}=id_U$ and $q_X\circ \Delta^{\prime}=can$ and $\sigma\circ \Delta^{\prime}=i_0$, \\
where $i_0$ is the zero section of the projection $\pr_U$;
\item[\rm{(b)}] $\sigma$
is \'{e}tale in a neighborhood of
$\mathcal Z^{\prime}\cup \Delta^{\prime}(U)$;
\item[\rm{(c)}]
$\sigma^{-1}(\sigma(\mathcal Z^{\prime}))=\mathcal Z^{\prime}\coprod \mathcal Z^{\prime\prime}$
scheme theoretically
for some closed subscheme $\mathcal Z^{\prime\prime}$
\\ and
$\mathcal Z^{\prime\prime} \cap \Delta^{\prime}(U)=\emptyset$;
\item[\rm{(d)}]
$\mathcal D_0:=\sigma^{-1}(\{0\} \times U)=\Delta^{\prime}(U)\coprod \mathcal D^{\prime}_0$
scheme theoretically
for some closed subscheme $\mathcal D^{\prime}_0$
and $\mathcal D^{\prime}_0 \cap \mathcal Z^{\prime}=\emptyset$;
\item[\rm{(e)}]
for $\mathcal D_1:=\sigma^{-1}(\{1\} \times U)$ one has
%$D^{\prime}_1 \cap \mathcal Z^{\prime}=\emptyset$.
$\mathcal D_1 \cap \mathcal Z^{\prime}=\emptyset$.
\item[\rm{(f)}]
there is a monic polynomial
$h \in \mathcal O[t]$
with
$(h)=Ker[\mathcal O[t] \xrightarrow{\sigma^*} k[\mathcal X^{\prime}] \xrightarrow{-} k[\mathcal X^{\prime}]/(f^{\prime})]$, \\
where the map bar takes any $g\in k[\mathcal X^{\prime}]$ to ${\bar g}\in k[\mathcal X^{\prime}]/(f^{\prime})$;\\
\item[\rm{(g)}] there are $\mathcal X^{\prime}$-group scheme isomorphisms
$\Phi:  q^*_U(\bG_U)\to q^*_X(\bG)$,
$\Psi:  q^*_U(\bT_U)\to q^*_X(\bT)$
with
$(\Delta^{\prime})^*(\Phi)= id_{\bG_U}$,
$(\Delta^{\prime})^*(\Psi)= id_{\bT_U}$
and
$q^*_X(\mu) \circ \Phi=\Psi \circ q^*_U(\mu_U)$.
\end{itemize}
\end{thm}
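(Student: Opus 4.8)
This is a geometric presentation statement (a ``nice triple''), and the plan is to build $\mathcal{X}'$ as a principal open neighbourhood — of the closed subsets occurring in (a)--(f) — inside a fibre product $X\times_{\Aff^{d-1}}U$ ($d=\dim X$), where $p\colon X\to\Aff^{d-1}$ is a carefully chosen elementary fibration putting the hypersurface $Z:=V(f)$ and the points $x_i$ in good position. Once $p$ is fixed, the three arrows of the diagram are essentially forced: $q_X$ is the projection to $X$, $q_U$ the projection to $U$, and $\Delta'=(\can,\id_U)$ the tautological section. Properties (a)--(f) then reduce to general-position assertions about $p$ and one auxiliary linear coordinate, whereas (g) is a rigidity/compatibility statement for the triple $(\bG,\bT,\mu)$ on the relative $U$-curve $\mathcal{X}'$.

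\textbf{Step 1: the fibration.} Fix a closed embedding $X\hookrightarrow\Aff^N_k$. By the geometric presentation (``elementary fibration'') lemma of Artin, in the semi-local form used by Ojanguren--Panin and proved by a Bertini-type argument, a generic linear projection $\Aff^N\to\Aff^{d-1}$ restricts, in a Zariski neighbourhood of $Z\cup\{x_i\}$, to a smooth morphism $p$ of relative dimension one with geometrically irreducible generic fibre, and with $p|_Z$ finite over a neighbourhood of the (finitely many) points $p(x_i)$. Replacing $X$ by a principal open neighbourhood of $\{x_i\}$ we regard $p$ as a morphism $X\to\Aff^{d-1}$ with these properties. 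Put $\mathcal{X}':=$ a principal open neighbourhood of $\Delta'(U)$ in the fibre product of $p\colon X\to\Aff^{d-1}$ with $p\circ\can\colon U\to\Aff^{d-1}$. Then $\mathcal{X}'$ is affine and irreducible, $q_U=\pr_U$ is smooth of relative dimension one, $q_X=\pr_X$ is essentially smooth (a base change of the essentially smooth morphism $p\circ\can$), and $q_U\circ\Delta'=\id_U$, $q_X\circ\Delta'=\can$.

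\textbf{Step 2: the finite map $\sigma$ and (a)--(f).} Since $\mathcal{X}'\to U$ is a smooth affine relative curve over the semi-local $U$, a generic linear coordinate on $\Aff^N$, suitably normalized (by subtracting a function pulled back from $U$ so that it vanishes on $\Delta'(U)$), pulls back to a function on $\mathcal{X}'$ defining, together with $q_U$, a finite surjective $U$-morphism $\sigma\colon\mathcal{X}'\to\Aff^1\times U$ with $\sigma\circ\Delta'=i_0$; this completes (a$'$). A further generic choice of this coordinate makes $\sigma$ unramified, hence \'etale, along $\mathcal{Z}'\cup\Delta'(U)$ (property (b)) and makes $\sigma|_{\mathcal{Z}'}$ a closed immersion, where $\mathcal{Z}':=q_X^{-1}(Z)=V(f')$ with $f':=q_X^*(f)$; the finiteness of $q_U|_{\mathcal{Z}'}$ required in (a) is inherited from the finiteness of $p|_Z$. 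Property (f) then holds because, for such a choice, the finite $\mathcal{O}$-algebra $k[\mathcal{X}']/(f')$ is generated by the image of $t$, with monic defining relation $h$; so the kernel of $\mathcal{O}[t]\to k[\mathcal{X}']/(f')$ is $(h)$. Finally (c), (d), (e) say that the complementary closed subschemes $\mathcal{Z}''\subset\sigma^{-1}(\sigma(\mathcal{Z}'))$, $\mathcal{D}'_0\subset\mathcal{D}_0=\sigma^{-1}(\{0\}\times U)$ and $\mathcal{D}_1=\sigma^{-1}(\{1\}\times U)$ miss $\Delta'(U)$ and $\mathcal{Z}'$; the locus where each fails is closed and avoids the semi-local set $\Delta'(U)\cup\mathcal{Z}'$, so it is removed by shrinking $\mathcal{X}'$. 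The substantive point is that all these conditions can be imposed at once; this is exactly what the presentation lemma delivers.

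\textbf{Step 3: property (g), the main obstacle.} In the fibre product both projections compose (with $p$, resp.\ with $p\circ\can$) to the \emph{same} morphism $\mathcal{X}'\to\Aff^{d-1}$; hence, if the fibration $p$ is chosen so that $\bG$, $\bT$ and $\mu$ are, in a neighbourhood of $Z\cup\{x_i\}$, pulled back from $\Aff^{d-1}$, then $q_X^*\bG=q_U^*\bG_U$ and $q_X^*\bT=q_U^*\bT_U$ tautologically and compatibly with $\mu$, and (g) holds with $\Phi,\Psi$ the identity maps. The difficulty is to realise this compatibility with the given $\bG,\bT,\mu$ simultaneously with the geometric requirements of Steps 1--2; when such a $p$ is not available one must instead propagate the canonical identification of $q_X^*(\bG,\bT,\mu)$ and $q_U^*(\bG_U,\bT_U,\mu_U)$ over the section $\Delta'(U)$ across the relative curve $q_U\colon\mathcal{X}'\to U$ — using the \'etaleness (b) near the section — by a rigidity argument for reductive group schemes together with their map to a torus. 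I expect this last point, together with the simultaneous verification of (a)--(f) in Step 2, to be the crux of the theorem; once the diagram and properties (a)--(g) are in hand, the deductions made in the rest of the paper are formal.
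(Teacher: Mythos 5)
First, a point of order: the paper does not prove Theorem \ref{equating3} at all --- it is quoted verbatim from \cite[Theorem 4.1]{Pan2}, so the ``proof'' in this paper is a citation. Your outline does follow the skeleton of the construction in that reference and its predecessors \cite{PSV}, \cite{Pan1}, \cite{Pan0}: an Artin-type elementary fibration $p\colon X\to \Aff^{d-1}$ putting $Z=V(f)$ and the points $x_i$ in finite position, the fibre product with $p\circ can\colon U\to\Aff^{d-1}$, the tautological section $\Delta'$, and a finite surjective $U$-morphism $\sigma$ to $\Aff^1\times U$. So the strategy is the right one.

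But the proposal has genuine gaps, and the decisive one is (g), which you yourself flag as ``the crux'' without carrying it out. Neither of your two routes works as stated: one cannot in general choose $p$ so that $\bG$, $\bT$, $\mu$ descend to $\Aff^{d-1}$ near $Z\cup\{x_i\}$, and ``a rigidity argument'' is not an argument. The actual mechanism is the equating theorem of \cite[Theorem 3.9]{PSV} (refined in \cite{Pan1}): the scheme $\underline{\text{Iso}}(q_U^*\bG_U,q_X^*\bG)$ is smooth and affine over $\mathcal X'$ and has a section over $\Delta'(U)$; one extends that section first over the henselization of $\mathcal X'$ along a closed subscheme finite over the semi-local $U$, then over an \'etale neighbourhood, i.e.\ one must replace the nice triple by a new one --- and do this compatibly for $\bG$, $\bT$ and $\mu$ at once. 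This replacement is also why your Step 2 is off on a second point: you set $f'=q_X^*(f)$ and $\mathcal Z'=q_X^{-1}(Z)$, whereas the theorem only asserts $f'\in q_X^*(f)\,k[\mathcal X']$. That is not cosmetic: $V(q_X^*f)$ in the chosen neighbourhood need not be finite over $U$, and every subsequent shrinking/\'etale refinement (needed for (b)--(e) and for (g)) forces extra factors into $f'$ to keep $\mathcal Z'$ closed, finite over $U$, and closedly embedded by $\sigma$. Finally, the finiteness of $\sigma$ does not come from ``a generic linear coordinate'' --- a generic function on an affine relative curve is quasi-finite, not finite; one needs the compactification built into the elementary fibration and a function with poles exactly along the divisor at infinity. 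These are precisely the technical points the cited construction exists to handle, and the sketch does not supply them.
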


\begin{rem}
The triple $(q_U: \mathcal X^{\prime} \to U, f^{\prime}, \Delta^{\prime})$ is
{\it a nice triple} over $U$, since $\sigma$ is
a finite surjective $U$-morphism.
See \cite[Defn.3.1]{PSV} for the definition of a nice triple.

The morphism
$q_X$ is not equal to $can\circ q_U$, since
$f^{\prime} \in q^*_X(\textrm{f})k[\mathcal X^{\prime}]$
and the morphism
$q_U|_{\mathcal Z^{\prime}}: \mathcal Z^{\prime}=\{f^{\prime}=0\} \to U$ is finite.

%We stress that the scheme $\mathcal X^{\prime}$ from Theorem \ref{DeformationDiagram0}
%does not coincide usually with the scheme $\mathcal X^{\prime}$ as in
%\cite[Theorem 6.1]{Pan1}.
\end{rem}
To formulate a consequence of the theorem
\ref{equating3} (see Corollary \ref{ElementaryNisSquareNew_1}),
note that using the items (b) and (c) of Theorem
\ref{equating3}
one can find an element
$g \in I(\mathcal Z^{\prime\prime})$
such that \\
(1) $(f^{\prime})+(g)=\Gamma(\mathcal X^{\prime}, \mathcal O_{\mathcal X^{\prime}})$, \\
(2) $Ker((\Delta^{\prime})^*)+(g)=\Gamma(\mathcal X^{\prime}, \mathcal O_{\mathcal X^{\prime}})$, \\
(3) $\sigma_g=\sigma|_{\mathcal X^{\prime}_g}: \mathcal X^{\prime}_g \to \Aff^1_U$ is \'{e}tale.\\

Here is the corollary. It is proved in \cite[Cor. 7.2]{Pan0}.
\begin{cor}
\label{ElementaryNisSquareNew_1}
The function $f^{\prime}$ from Theorem \ref{equating3}, the polynomial $h$ from the item $(\textrm{f} \ )$
of that Theorem, the morphism $\sigma: \mathcal X^{\prime} \to \Aff^1_U$
and the function
$g \in \Gamma(\mathcal X,\mathcal O_{\mathcal X} )$
defined just above
enjoy the following properties:
\begin{itemize}
\item[\rm{(i)}]
the morphism
$\sigma_g= \sigma|_{\mathcal X^{\prime}_g}: \mathcal X^{\prime}_g \to \Aff^1\times U $
is \'{e}tale,
\item[\rm{(ii)}]
data
$ (\mathcal O[t],\sigma^*_g: \mathcal O[t] \to \Gamma(\mathcal X^{\prime},\mathcal O_{\mathcal X^{\prime}})_g, h ) $
satisfies the hypotheses of
\cite[Prop.2.6]{C-TO},
i.e.
$\Gamma(\mathcal X^{\prime},\mathcal O_{\mathcal X^{\prime}} )_g$
is a finitely generated
$\mathcal O[t]$-algebra, the element $(\sigma_g)^*(h)$
is not a zero-divisor in
$\Gamma(\mathcal X^{\prime},\mathcal O_{\mathcal X^{\prime}} )_g$
and
$\mathcal O[t]/(h)=\Gamma(\mathcal X^{\prime},\mathcal O_{\mathcal X^{\prime}})_g/h\Gamma(\mathcal X^{\prime},\mathcal O_{\mathcal X^{\prime}})_g$ \ ,
\item[\rm{(iii)}]
$(\Delta(U) \cup \mathcal Z') \subset \mathcal X^{\prime}_g$ \ and $\sigma_g \circ \Delta=i_0: U\to \Aff^1\times U$,
\item[\rm{(iv)}]
$\mathcal X^{\prime}_{gh} \subseteq \mathcal X^{\prime}_{gf^{\prime}}\subseteq \mathcal X^{\prime}_{f^{\prime}}\subseteq \mathcal X^{\prime}_{q^*_X(\emph{f})}$ \ ,
\item[\rm{(v)}]
$\mathcal O[t]/(h)=\Gamma(\mathcal X^{\prime},\mathcal O_{\mathcal X^{\prime}})/(f^{\prime})$,
$h\Gamma(\mathcal X^{\prime},\mathcal O_{\mathcal X^{\prime}})=(f^{\prime})\cap I(\mathcal Z^{\prime\prime})$
and
$(f^{\prime}) +I(\mathcal Z^{\prime\prime})=\Gamma(\mathcal X^{\prime},\mathcal O_{\mathcal X^{\prime}})$.
\end{itemize}
\end{cor}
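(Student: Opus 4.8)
The plan is to read off all five items from the defining conditions (1)--(3) on the function $g$, together with properties (a)--(g) of Theorem~\ref{equating3}, the scheme-theoretic decomposition in property~(c) being the one serious ingredient. Item~(i) is just condition~(3). For item~(iii) I would argue as follows: $\Delta(U)\subset\mathcal X'_g$ is condition~(2), which says precisely that $g$ does not vanish along $\Delta'(U)$; $\mathcal Z'\subset\mathcal X'_g$ is condition~(1), which says $g$ is a unit modulo $(f')$; and $\sigma_g\circ\Delta=i_0$ is the restriction to $\mathcal X'_g$ of the identity $\sigma\circ\Delta'=i_0$ from property~(a').

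I would handle item~(v) before (ii) and (iv), since the latter two rest on it. The ring isomorphism $\mathcal O[t]/(h)\cong\Gamma(\mathcal X')/(f')$ follows at once from property~(f): by definition $(h)$ is the kernel of the composite $\mathcal O[t]\xrightarrow{\sigma^*}\Gamma(\mathcal X')\twoheadrightarrow\Gamma(\mathcal X')/(f')=\Gamma(\mathcal Z')$, and this composite is onto because, by property~(a), $\sigma|_{\mathcal Z'}$ is a closed embedding into $\Aff^1\times U=\Spec\mathcal O[t]$. Hence the scheme-theoretic image $\sigma(\mathcal Z')$ is the closed subscheme $V(h)$, and pulling its ideal back along $\sigma$ and using the scheme-theoretic equality $\sigma^{-1}(\sigma(\mathcal Z'))=\mathcal Z'\coprod\mathcal Z''$ of property~(c) gives $h\,\Gamma(\mathcal X')=I(\mathcal Z')\cap I(\mathcal Z'')=(f')\cap I(\mathcal Z'')$; the disjointness encoded in that coproduct is exactly $(f')+I(\mathcal Z'')=\Gamma(\mathcal X')$. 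Item~(iv) is then a short string of radical-membership verifications: $f'\in q_X^*(f)\,\Gamma(\mathcal X')$ gives $\mathcal X'_{f'}\subseteq\mathcal X'_{q_X^*(f)}$, the inclusion $\mathcal X'_{gf'}\subseteq\mathcal X'_{f'}$ is trivial, and $h\in(f')$ (from the previous line) yields $gh\in(gf')$, hence $\mathcal X'_{gh}\subseteq\mathcal X'_{gf'}$.

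For item~(ii) I would verify the three hypotheses of \cite[Prop.~2.6]{C-TO} one at a time. Finite generation of $\Gamma(\mathcal X')_g$ over $\mathcal O[t]$ is clear, $\mathcal X'$ being affine and of finite type over $U=\Spec\mathcal O$ and one only adjoining $g^{-1}$. Since $\mathcal X'$ is regular and irreducible it is integral, so $\Gamma(\mathcal X')_g$ is a domain --- nonzero because $\mathcal Z'\subset\mathcal X'_g$ --- and the nonzero element $\sigma_g^*(h)$ is automatically a non-zero-divisor in it. Finally, localizing $h\,\Gamma(\mathcal X')=(f')\cap I(\mathcal Z'')$ at $g$ makes $I(\mathcal Z'')$ the unit ideal (as $g\in I(\mathcal Z'')$), so $h\,\Gamma(\mathcal X')_g=f'\,\Gamma(\mathcal X')_g$, and since condition~(1) already makes $g$ invertible modulo $(f')$ we get $\Gamma(\mathcal X')_g/h\,\Gamma(\mathcal X')_g=\Gamma(\mathcal X')/(f')=\mathcal O[t]/(h)$.

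The only point where I expect to have to be careful is the passage, inside item~(v), from the set-theoretic preimage $\sigma^{-1}(\sigma(\mathcal Z'))$ to its scheme structure: one must check that the scheme-theoretic preimage of $V(h)$ under the finite morphism $\sigma$ is cut out by the extended ideal $h\,\Gamma(\mathcal X')$, and then invoke property~(c) as a statement about closed subschemes and not merely about underlying topological spaces. Once that identification is in place, everything else is routine manipulation of localizations and radicals.
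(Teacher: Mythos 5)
Your proposal is correct. Note that the paper itself gives no argument for this corollary at all --- it simply states it and refers to \cite[Cor.~7.2]{Pan0} --- so there is nothing in the text to compare against; your self-contained derivation from properties (a)--(g) of Theorem~\ref{equating3} and conditions (1)--(3) on $g$ is the standard one and checks out. In particular you are right that the only genuinely delicate point is in item (v): property (c) must be read as an equality of closed subschemes, so that the ideal of the scheme-theoretic preimage of $\sigma(\mathcal Z')=V(h)$ is the extended ideal $h\,\Gamma(\mathcal X',\mathcal O_{\mathcal X'})$ and the coproduct decomposition translates into $h\,\Gamma(\mathcal X',\mathcal O_{\mathcal X'})=(f')\cap I(\mathcal Z'')$ together with $(f')+I(\mathcal Z'')=\Gamma(\mathcal X',\mathcal O_{\mathcal X'})$; everything else (the surjectivity of $\mathcal O[t]\to\Gamma(\mathcal X',\mathcal O_{\mathcal X'})/(f')$ via the closed embedding of (a), the integrality of $\mathcal X'$ giving the non-zero-divisor claim, and the localization at $g\in I(\mathcal Z'')$ killing $I(\mathcal Z'')$ in item (ii)) is exactly as you say.
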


\section{Purity}\label{purity}
Let $S$ be a regular ring, $\bG$ a reductive $S$-group scheme, $\bT$ an $S$-torus,
$\mu: \bG\to \bT$ an $S$-group scheme morphisms which is smooth as a scheme morphism.
Suppose $S$ is an integral domain. Let $L$ be its field of fractions. For each $S$-algebra $S'$
write ${\cal F}(S')$ for the group $\bT(S')/\mu(\bG(S'))$. For any $a\in \bT(S')$ write $\bar a$
for the class of $a$ in ${\cal F}(S')$. Let $\mathfrak p$ be a height one prime ideal in $S$,
then by Theorem \ref{Aus_Buksbaum_3} the group ${\cal F}(S_{\mathfrak p})$ is a subgroup in ${\cal F}(L)$.

Recall some notion. For an element $a\in \bT(L)$ and a height one prime $\mathfrak p\subset S$
we say that $\bar a\in {\cal F}(L)$ is {\it unramified at} $\mathfrak p$,
if $\bar a$ is in ${\cal F}(S_{\mathfrak p})$.
We say that the element $\bar a\in {\cal F}(L)$ is $S$-unramified if for any height one prime ideal $\mathfrak p$ in $S$
the element $\bar a$ is in ${\cal F}(S_{\mathfrak p})$. Clearly, the image of ${\cal F}(S)$ in ${\cal F}(L)$
is in $\cap \ {\cal F}(S_{\mathfrak p})$, where the intersection is taken over all height one primes of $S$.
We say that {\it purity holds for the ring} $S$ if
$$Im[{\cal F}(S)\to {\cal F}(L)]=\cap \ {\cal F}(S_{\mathfrak p}).$$
Equivalently, purity holds for $S$ if each $S$-unramified element of ${\cal F}(L)$ comes from ${\cal F}(S)$.
Clearly, the sequence
$\{1\} \to {\cal F}(S_{\mathfrak p}) \to
{\cal F}(L) \xrightarrow{r_{\mathfrak p}}
\bT(L)/[\bT(R_{\mathfrak p})\cdot \mu(\bG(L))] \to \{1\}
$
is exact, where $r_{\mathfrak p}$ is the factorization map.
Thus, an element $a\in \bT(L)$ its class $\bar a$ in ${\cal F}(L)$ is unramified at $\mathfrak p$,
if and only if $r_{\mathfrak p}(\bar a)=0$. Hence
purity holds for $S$ if and only if the sequence
${\cal F}(S)\to {\cal F}(L)\xrightarrow{\sum r_{\mathfrak p}} \oplus_{\mathfrak p} \bT(L)/[\bT(R_{\mathfrak p})\cdot \mu(\bG(L))]$
is exact. Our aim is to prove the following assertion:\\
%Let $M\subset S$ be a multiplicative set and $S_M$ the localization of $S$.
$(\ast)$ Purity holds for the ring $R$, group schemes $\bG$,$\bT$ and the morphism $\mu$ as in Theorem \ref{Aus_Buksbaum_3}.\\\\
The proof is subdivided in few steps.\\
{\it Claim 1.}
Let $X$ be a $k$-smooth irreducible affine $k$-variety.
Let $\bG$ be a reductive $X$-group scheme, $\bT$ be an $X$-torus and
$\mu: \bG \to \bT$ be an $X$-group scheme morphism which is smooth as an $X$-scheme morphism.
Suppose the $k$-algebra $R$ is the semi-local ring of finitely many closed points
on  $X$. Then purity holds for $R$.

To prove this Claim we just repeat literally the proof of
\cite[Theorem 1.1]{Pan2} replacing references to
\cite[Corollary 4.3, (ii),(v)]{Pan2}
with the one to the items (ii) and (v) of
Corollary \ref{ElementaryNisSquareNew_1}.
Replacing also references to \cite[Lemma 6.7]{Pan2}
with the one to
Lemma \ref{KeyUnramifiedness}.
Replacing also references to \cite[Theorem 6.5]{Pan2}
with the one to Theorem \ref{HomInvNonram}.
Replacing also references to \cite[Corollary 6.6]{Pan2}
with the one to Corollary \ref{TwoSpecializations}.
Replacing also references to \cite[Theorem 4.1]{Pan2}
with the one to Theorem \ref{equating3}.
Replacing also references to \cite[Definition 6.4]{Pan2}
with the one to the remark at the end of Definition \ref{SpecializationDef}.
The Claim 1 is proved.\\\\
{\it Claim 2.}
Let $X$ be a $k$-smooth irreducible affine $k$-variety and $\xi_1,...,\xi_n$ be points of the scheme $Spec(k[X])$
such that for each pair $r,s$ the point $\xi_r$ is not in the closure $\overline {\{\xi_s\}}$ of $\xi_s$.
Let $R$ be the semi-local ring $\cO_{X,\xi_1,...,\xi_n}$ of scheme points $\xi_1,...,\xi_n$ of $Spec(k[X])$.
Let $\bG$ be a reductive $X$-group scheme, $\bT$ be an $X$-torus and
$\mu: \bG \to \bT$ be an $X$-group scheme morphism which is smooth as an $X$-scheme morphism.
Then purity holds for $R$.

To prove this Claim take an element $a \in \bT(k(X))$ such that $\bar a$ is unramified at each irreducible divisor $D$ containing
at least one of the points $\xi_r$. We have to prove that the element $\bar a\in {\cal F}(K)$ is in the image of ${\cal F}(R)$.
Clearly, there is a non-zero $f\in k[X]$ such that $a \in \bT(k[X_f])$. Write down the divisor $div(f)\in Div(X)$
in the form $div(f)=\Sigma m_iD_i + \Sigma n_jD'_j$ such that for each index $i$ there is an index $r$ with $\xi_r\in D_i$
and for any index $j$ and any index $r$ the point $\xi_r$ does not belong to $D'_j$.
There is an element $g\in k[X]$
such that for any index $j$ one has $D'_j$ is contained in the closed subset $\{g=0\}$ and $g$ does not belong to any of $\xi_r$'r.
Replacing $X$ with $X_g$ we see that $a \in \bT(k[X_f])$, $div(f)=\Sigma m_iD_i$ and $\bar a$ is unramified at each irreducible divisor $D_i$.
Hence $\bar a$ is unramified at each height one prime ideal of $k[X]$.
Our assumption on points $\xi_r$'s yield the following: one can choose
closed points $x_r\in \overline {\{\xi_s\}}$ such that for each $r\neq s$ the point $x_r$ is not in $\overline {\{\xi_s\}}$.
Particularly, for each $r\neq s$ one has $x_r\neq x_s$. The element $\bar a$ is unramified at each height one prime ideal of $k[X]$.
Thus, by Claim 1 the element $\bar a$ is in the image of ${\cal F}(\cO_{X,x_1,...,x_n})$.
So, the element $\bar a$ is in the image of ${\cal F}(\cO_{X,\xi_1,...,\xi_n})={\cal F}(R)$.
The Claim 2 is proved.\\\\
{\it Claim 3.}
The assertion $(\ast)$ is true.\\
In the rest of the section we prove Claim 3.
Clearly, we may assume that $k$ is a prime field and hence $k$ is perfect.
It follows from Popescu's theorem [Pop, Swa, Spi] that $R$
is a filtered inductive limit of smooth $k$-algebras $R_{\alpha}$. Modifying the inductive system
$R_{\alpha}$
if necessary, we can assume that each $R_{\alpha}$ is integral.
For each maximal ideal $\mathfrak m_i$ in $R$ ($i = 1,...,n$) set
$\mathfrak p_i = \phi^{-1}_{\alpha}(\mathfrak m_i)$.
The homomorphism $\phi_{\alpha}: R_{\alpha}\to R$ induces a homomorphism of semi-local rings
$\phi^{\prime}_{\alpha}: (R_{\alpha})_{\mathfrak p_1,...,\mathfrak p_n} \to R$.
Since this moment we will write $A_{\alpha}$ for $(R_{\alpha})_{\mathfrak p_1,...,\mathfrak p_n}$
and $A$ for $R$ (to keep consistency of notation).
%and write $\phi_{\alpha}$ for $\phi^{\prime}_{\alpha}$. Finally, we will write
Thus, $A$
is a filtered inductive limit of regular semi-local $k$-algebras $A_{\alpha}$.

There exist an index $\alpha$, a reductive
group scheme $\bG_{\alpha}$, a torus $\bT_{\alpha}$ over $A_{\alpha}$ and
an $A_{\alpha}$-group scheme morphism $\mu_{\alpha}: \bG_{\alpha} \to \bT_{\alpha}$
which is smooth as an $A_{\alpha}$-scheme morphism
such that
$\bG=\bG_{\alpha}\times_{Spec(A_{\alpha})} Spec(A)$,
$\bT=\bT_{\alpha}\times_{Spec(A_{\alpha})} Spec(A)$,
$\mu=\mu_{\alpha}\times_{Spec(A_{\alpha})} Spec(A)$.
Replacing the index system with a co-final one consisting of indexes $\beta\geq \alpha$,
we may and will suppose that the reductive
group scheme
$\bG$, the torus $\bT$ and the group scheme morphism $\mu: \bG\to \bT$
comes from $A_{\alpha}$, and $\mu$ is is smooth as an $A_{\alpha}$-scheme morphism.
These observations and Claim 2 yield the following intermediate result \\
$(\ast\ast)$ for these
$\bG$, $\bT$ and $\mu: \bG\to \bT$ over $A_{\alpha}$ purity holds
for each ring $A_{\beta}$ with $\beta\geq \alpha$.

Let now $K$ be the field of fractions of $A$ and, for each
$\beta\geq \alpha$,  let $K_\beta$ be the field of fractions of $A_\beta$.
For each index $\beta\geq \alpha$ let
$\mathfrak a_{\beta}$
be the kernel of the map
$\phi^{\prime}_{\beta}: A_{\beta} \to A$
and
$B_{\beta}=(A_{\beta})_{\mathfrak a_{\beta}}$.
Clearly, for each
$\beta\geq \alpha$,  $K_{\beta}$
is the field of fractions of
$B_{\beta}$.
The composition map
$A_{\beta} \to A \to K$
factors through $B_{\beta}$.
%and hence it also factors through the residue field $k_{\beta}$
%of the local ring $B_{\beta}$.
Since $A$ is a filtering direct limit of the $A_{\beta}$'s
we see that $K$ is a filtering direct limit of the $B_{\beta}$'s.
We will write
$\psi_{\beta}$
for the canonical morphism
$B_{\beta} \to K$.

\begin{lem}
\label{WeakGrothendieck}
For each index $\alpha$ the group map $W(B_{\alpha})\to W(K_{\alpha})$ is injective.
\end{lem}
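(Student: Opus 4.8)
The plan is to observe that, for the index $\alpha$ fixed in the reduction carried out just above, the ring $B_\alpha$ already lies within the scope of the first assertion of Theorem~\ref{Aus_Buksbaum_3}, which was proved in Section~\ref{One_Lemma} and is thus available at this stage; the lemma is then that assertion applied to the pair $(B_\alpha,K_\alpha)$.

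First I would record the relevant properties of $B_\alpha$. By construction $B_\alpha=(A_\alpha)_{\mathfrak a_\alpha}$ is the localization of the regular semi-local integral domain $A_\alpha$ at the prime $\mathfrak a_\alpha=\ker(\phi'_\alpha)$, hence it is itself a regular local — in particular semi-local — integral domain, it contains the prime field $k$, and its field of fractions is $K_\alpha$ (which is also the field of fractions of $A_\alpha$). After the reduction performed just above, the reductive group scheme $\bG$, the torus $\bT$ and the morphism $\mu: \bG\to\bT$, smooth as a scheme morphism, are all defined over $A_\alpha$, so they base-change along $A_\alpha\to B_\alpha$; then $W(B_\alpha)=\bT(B_\alpha)/\mu(\bG(B_\alpha))$ and $W(K_\alpha)=\bT(K_\alpha)/\mu(\bG(K_\alpha))$ are the groups our functor attaches to these data over $B_\alpha$. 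Next I would simply invoke the first assertion of Theorem~\ref{Aus_Buksbaum_3} with $R:=B_\alpha$, $K:=K_\alpha$ and with $\mu$ base-changed to $B_\alpha$: it gives exactly that $W(B_\alpha)\to W(K_\alpha)$ is injective.

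The one point that needs care — and the reason this elementary observation is singled out as a separate lemma — is the logical order: the purity statement $(\ast)$ under proof in this section must not be used here, and it is not, since the first assertion of Theorem~\ref{Aus_Buksbaum_3} is deduced in Section~\ref{One_Lemma} from Lemma~\ref{Kernel_G_to_T} and Theorem~\ref{MainThm1} (the latter resting on \cite{Pan3}), with no purity input. So there is no circularity and no genuine obstacle; the only substance is that $B_\alpha$, although it is the local ring of $\Spec A_\alpha$ at a point that need not be closed, is still covered by the general form of Theorem~\ref{Aus_Buksbaum_3}, so that the semi-local-ring-of-closed-points arguments used in Claims~1 and~2 are not needed for this step.
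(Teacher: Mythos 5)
Your proposal is correct and coincides with the paper's own proof, which simply applies the first assertion of Theorem~\ref{Aus_Buksbaum_3} to the $k$-algebra $B_{\alpha}$; your additional verification that $B_{\alpha}$ is a regular local integral domain containing a field with fraction field $K_{\alpha}$, and your remark on the absence of circularity, only make explicit what the paper leaves implicit.
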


\begin{proof}
Just apply the first part of Theorem \ref{Aus_Buksbaum_3} to the $k$-algebra $B_{\alpha}$.
\end{proof}

\begin{lem}
\label{LiftToFiniteLevel}
Let $a\in W(K)$ be an $A$-unramified element.
Then there exists an index $\alpha$ and an element
$b_{\alpha} \in W(B_{\beta})$
such that
$\psi_{\alpha}(b_{\alpha})=b$
and the class
$b_{\alpha} \in W(K_{\beta})$
is $A_{\alpha}$-unramified.
\end{lem}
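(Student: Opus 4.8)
The plan is to push $a$ down to one of the rings $A_\beta$ by a finite-presentation argument, and then control its ramification over $A_\beta$ through a few ``automatic'' cases together with a spreading-out of the hypothesis that $a$ is $A$-unramified. Write ${\cal F}$ for the functor $S'\mapsto\bT(S')/\mu(\bG(S'))$ appearing in the statement (there denoted $W$), and fix once and for all a nonzero $f\in A$ together with a representative $\tilde a\in\bT(A_f)\subseteq\bT(K)$ of $a$. Since $\bG$, $\bT$, $\mu$ are defined over $A_\alpha$, they are finitely presented over every $B_\beta$, so $\bT(K)=\varinjlim_\beta\bT(B_\beta)$ and $\bG(K)=\varinjlim_\beta\bG(B_\beta)$; hence for $\beta$ large $\tilde a$ is the image of some $\tilde a_\beta\in\bT(B_\beta)$. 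Put $b_\beta=[\tilde a_\beta]\in{\cal F}(B_\beta)$, so $\psi_\beta(b_\beta)=a$ by construction, and let $c_\beta\in{\cal F}(K_\beta)$ be the image of $b_\beta$ under ${\cal F}(B_\beta)\to{\cal F}(K_\beta)$. Everything then reduces to showing that, after possibly enlarging $\beta$, the class $c_\beta$ is $A_\beta$-unramified.

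Fix a lift $f_\beta\in A_\beta$ of $f$. Since $f\ne0$ in $A$ we have $f_\beta\notin\mathfrak a_\beta=\ker(A_\beta\to A)$, so $f_\beta$ is a unit in $B_\beta=(A_\beta)_{\mathfrak a_\beta}$ and $A_\beta\to B_\beta$ factors through $(A_\beta)_{f_\beta}$; enlarging $\beta$ I may assume $\tilde a_\beta\in\bT((A_\beta)_{f_\beta})$. Let $\mathfrak q\subset A_\beta$ be a height one prime. If $f_\beta\notin\mathfrak q$, then $\tilde a_\beta\in\bT((A_\beta)_{\mathfrak q})$, so $c_\beta\in{\cal F}((A_\beta)_{\mathfrak q})$. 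If $\mathfrak q\subseteq\mathfrak a_\beta$, then $(A_\beta)_{\mathfrak q}=(B_\beta)_{\mathfrak qB_\beta}$, and $\tilde a_\beta\in\bT(B_\beta)$ again gives $c_\beta\in{\cal F}((A_\beta)_{\mathfrak q})$. So the only primes to worry about are the height one $\mathfrak q$ with $f_\beta\in\mathfrak q$ (these automatically satisfy $\mathfrak q\not\subseteq\mathfrak a_\beta$).

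For those, I use that $a$ is $A$-unramified. Let $\mathfrak p_1,\dots,\mathfrak p_m$ be the finitely many height one primes of $A$ minimal over $(f)$ --- the only ones at which the representative $\tilde a$ can ramify. For each $i$, the inclusion $a\in{\cal F}(A_{\mathfrak p_i})\subseteq{\cal F}(K)$ yields $g_i\in\bG(K)$ and $v_i\in A\setminus\mathfrak p_i$ such that the element $\tilde b_i:=\mu(g_i)\cdot\tilde a\in\bT(K)$ in fact lies in $\bT(A_{v_i})$, all three of $\tilde a$, $\mu(g_i)$, $\tilde b_i$ being defined over $A_{fv_iw_i}$ for a suitable nonzero $w_i$ with $g_i\in\bG(A_{w_i})$. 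The ideal $I=(f,v_1,\dots,v_m)\subseteq A$ has height $\ge2$, since a prime over it contains some $\mathfrak p_i$ and also $v_i\notin\mathfrak p_i$, hence strictly contains $\mathfrak p_i$. Descending the $g_i$, $v_i$, $w_i$, $\tilde b_i$ and these identities to a level $\beta_0$ (and taking $\beta\ge\beta_0$), one gets $\tilde b_{i,\beta}\in\bT((A_\beta)_{v_{i,\beta}})$ with $\tilde b_{i,\beta}=\mu(g_{i,\beta})\tilde a_\beta$ over $(A_\beta)_{f_\beta v_{i,\beta}w_{i,\beta}}$, so $[\tilde b_{i,\beta}]=c_\beta$ in ${\cal F}(K_\beta)$. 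Hence if $\mathfrak q\ni f_\beta$ but $v_{i,\beta}\notin\mathfrak q$ for some $i$, then $\tilde b_{i,\beta}\in\bT((A_\beta)_{\mathfrak q})$, and since $(A_\beta)_{\mathfrak q}$ is a discrete valuation ring, Theorem~\ref{NisnevichCor} (equivalently, the first assertion of Theorem~\ref{Aus_Buksbaum_3}) gives $c_\beta\in{\cal F}((A_\beta)_{\mathfrak q})$. What remains are the height one primes of $A_\beta$ containing $I_\beta=(f_\beta,v_{1,\beta},\dots,v_{m,\beta})$. Since $I_{\beta_0}A=I$ has height $\ge2$ in $A$ and $A=\varinjlim_\beta A_\beta$, a limit argument shows that $I_\beta A_\beta$ has height $\ge2$ in $A_\beta$ for $\beta$ large: each $A_\beta$ is regular, hence Cohen--Macaulay, so ``height $\ge2$'' is equivalent to the vanishing of $\operatorname{Ext}^0$ and $\operatorname{Ext}^1$ of $A_\beta/I_\beta$ against $A_\beta$, and these $\operatorname{Ext}$-groups are compatible with the filtered colimit of the $A_\beta$ (the standard spreading-out of EGA~IV, \S8). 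For such $\beta$ no primes are left, so $c_\beta$ is $A_\beta$-unramified and the lemma follows. I expect this last descent of ``$\operatorname{ht}I\ge2$'' to be the one genuinely delicate point --- the transition maps of the Popescu system being not flat a priori --- while everything else is bookkeeping with finite presentation and the injectivity statements already available.
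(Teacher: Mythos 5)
Your overall skeleton is the right one: represent $a$ by $\tilde a\in\bT(A_f)$, use unramifiedness at the finitely many height-one primes of $A$ containing $f$ to produce the elements $v_i$ and $\tilde b_i=\mu(g_i)\tilde a\in\bT(A_{v_i})$, descend all of this data and all identities to a finite level, and observe that the only height-one primes $\mathfrak q\subset A_\beta$ still in question are those containing $I_\beta=(f_\beta,v_{1,\beta},\dots,v_{m,\beta})$. The gap is exactly where you suspected it, and your proposed repair does not close it. First, the compatibility of $\operatorname{Ext}^1_{A_\beta}(A_\beta/I_\beta,A_\beta)$ with the filtered colimit is not ``standard spreading-out'': EGA~IV~\S8 gives this for $\operatorname{Hom}$ of finitely presented modules, but for $\operatorname{Ext}^{\geq 1}$ one would have to base-change a finite free resolution of $A_\beta/I_\beta$ along the non-flat transition maps, and the base-changed complex is no longer a resolution of $A_\gamma/I_\gamma$. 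Second, and more fatally, even granting $\varinjlim_\gamma\operatorname{Ext}^i_{A_\gamma}(A_\gamma/I_\gamma,A_\gamma)=\operatorname{Ext}^i_A(A/I,A)=0$ for $i=0,1$, this only says that every class in $\operatorname{Ext}^i_{A_\gamma}$ dies at some higher level; since the transition maps need not be surjective, it does not force $\operatorname{Ext}^i_{A_\gamma}=0$ for any finite $\gamma$. So ``$\operatorname{ht} I_\gamma\geq 2$ for $\gamma$ large'' is not established by your argument.

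The statement you need is nevertheless true --- in fact at every level $\gamma$ at which the data is defined, with no further enlargement --- but the proof must use two features of the construction that your argument never invokes: the rings $A_\gamma$ are regular \emph{semi-local} domains, hence have trivial Picard (and class) group and are UFDs; and by the very definition $\mathfrak p_i=\phi_\gamma^{-1}(\mathfrak m_i)$ the maximal ideals of $A_\gamma$ are the contractions of those of $A$, so $\phi'_\gamma\colon A_\gamma\to A$ carries non-units to non-units. Now suppose $\mathfrak q\subset A_\gamma$ is a height-one prime containing $I_\gamma$. Being a height-one prime of a UFD it is principal, $\mathfrak q=(\pi)$, so $\pi$ divides $f_\gamma$ and every $v_{i,\gamma}$. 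Its image $\phi'_\gamma(\pi)\in A$ then divides $f$ and every $v_i$, is nonzero (otherwise $f=0$), and is a non-unit (it lies in a maximal ideal of $A$, since $\pi$ lies in one of $A_\gamma$). Hence $I=(f,v_1,\dots,v_m)\subseteq\phi'_\gamma(\pi)A$, and Krull's principal ideal theorem gives $\operatorname{ht}_A I\leq 1$, contradicting $\operatorname{ht}_A I\geq 2$. With this substitution your proof is complete; without it, the key step is unproved.
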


\begin{proof}
Repeat literally the proof of \cite[Lemma 9.0.9]{P1}. It works
for the semi-local case as well.
\end{proof}
We complete the proof of Claim 3 as follows.
Let $a\in W(K)$ be an $A$-unramified element.
We have to check that it comes from $W(A)$.
By Lemma \ref{LiftToFiniteLevel}
there exists an index $\alpha$ and an element
$b_{\alpha} \in W(B_{\alpha})$
such that
$\psi_{\alpha}(b_{\alpha})=b$
and the class
$b_{\alpha} \in W(K_{\beta})$
is $A_{\alpha}$-unramified.
For this index $\alpha$ consider a commutative diagram of $k$-algebras
$$
\xymatrix{
    A_{\alpha}  \ar[d]\ar[rr]^-{\varphi_{\alpha}}\ar[d]_-{} && A \ar[d]^-{} &   \\
     B_{\alpha} \ar[d]\ar[rr]^-{\psi_{\alpha}}  &&  K \\
     K_{\alpha}.
    }
$$
The class
$\bar b_{\beta} \in {\cal F}(K_{\beta})$
is $A_{\beta}$-unramified.
Hence by the statement $(\ast\ast)$ there exists an element
$a_{\beta} \in \bT(A_{\beta})$
such that
$\bar b_{\beta}=\bar a_{\beta}$ in ${\cal F}(K_{\beta})$.
By Lemma
\ref{WeakGrothendieck}
one has an equality
$\bar b_{\beta}=\bar a_{\beta}$ in
${\cal F}(B_{\beta})$.
Hence $\bar b \in {\cal F}(K)$ coincides with
the image of the element $\phi_{\beta} (\bar a_{\beta})$
in ${\cal F}(K)$.
The Claim 3 is proved.
Thus, the sequence
\eqref{Aus_Buks_sequence_3} is exact at its middle term.

\section{Proof of Theorem \ref{Aus_Buksbaum_3}}
\label{SectAus_Buksbaum_3}
\begin{proof}[Proof of Theorem \ref{Aus_Buksbaum_3}]
The proof of the first assertion of Theorem \ref{Aus_Buksbaum_3} is given in Section \ref{One_Lemma}.
The exactness of the sequence
\eqref{Aus_Buks_sequence_3} at its middle term is proved in Section \ref{purity}.

Prove now the surjectivity of the map $\sum r_{\mathfrak p}$.
Clearly, it is sufficient to prove
the surjectivity of the map
$\mathbf T(K) \xrightarrow{\sum r'_{\mathfrak p}} \bigoplus_{\mathfrak p} \mathbf T(K)/\mathbf T(R_{\mathfrak p})$,
where $\mathfrak p$ runs over
the height $1$ primes of $R$ and $r'_{\mathfrak p}$ is the factorisation map.
We follow arguments from
\cite[Section 9]{Pan3}.

We prefer to switch to the scheme terminology.
Set $X:=Spec(R)$.
Consider a finite \'{e}tale Galois morphism $\pi: \tilde X\to X$ with irreducible $\tilde X$
such that the torus $\bT$ splits over $\tilde X$. Let $Gal:=Aut(\tilde X/X)$ be its Galois group.
Since the torus $\bT$ splits over $\tilde X$ we have a short exact sequence of $Gal$-modules
$$0\to \bT(\tilde \cO)\to \bT(\tilde K)\to \oplus_{y} \bT(\tilde K)/\bT(\tilde \cO_{X,y})\to 0,$$
where $\tilde \cO=\Gamma(\tilde X, \mathcal O_{\tilde X})$, $\tilde K$ is the fraction field of $\tilde \cO$,
%$\tilde K=k(\tilde X)$, $\tilde \cO$ is the semi-local ring $\cO_{\tilde X, \tilde x}$ of the finite set $\tilde x=\pi^{-1}(x)$,
$y$ runs over the set $X^{(1)}$ of all
codimension $1$ points of $X$
%{\bf the local scheme $U:=Spec(\mathcal O)$
%}
and for any $y\in X^{(1)}$ the ring
$\tilde \cO_{X,y}$ is the semi-local ring $\cO_{\tilde X, \tilde y}$ of the finite set $\tilde y=\pi^{-1}(y)$
on the scheme $\tilde X$. Write $\cO$ for $R$ to be consistent with the above notation.

The above short exact sequence of $Gal$-modules gives rise to a long exact sequence of $Gal$-cohomology groups of the form
$$0\to \bT(\cO)\xrightarrow{in} \bT(K)\to \oplus_{y} [\bT(\tilde K)/\bT(\tilde \cO_{X,y})]^{Gal} \to H^1(Gal,\bT(\tilde \cO))\xrightarrow{H^1(in)} H^1(Gal,\bT(\tilde K)).$$
We claim that the map $H^1(in)$ is a monomorphism. Indeed, the group $H^1(Gal,\bT(\tilde \cO))$ is a subgroup of the group $H^1_{et}(X, \bT)$
and the group $H^1(Gal,\bT(\tilde K))$ is a subgroup of the group $H^1_{et}(Spec~K, \bT_K)$. By Theorem \ref{MainThm1}
the group map $H^1_{et}(X, \bT)\to H^1_{et}(Spec~K, \bT_K)$ is a monomorphism. Thus, $H^1(in)$ is a monomorphism also.
So, we have a short exact sequence of the form
$0\to \bT(\cO)\xrightarrow{in} \bT(K)\to \oplus_{y} [\bT(\tilde K)/\bT(\tilde \cO_{X,y})]^{Gal}\to 0$.

There is also the complex $0\to \bT(\cO)\xrightarrow{in} \bT(K)\to \oplus_{y} \bT(K)/\bT(\cO_{X,y})$. Set
$\alpha=id_{\bT(\cO)}$, $\beta=id_{\bT(K)}$ and let $\gamma=\oplus_{y} \gamma_y $, where
$\gamma_y: \bT(K)/\bT(\cO_{X,y})\to [\bT(\tilde K)/\bT(\tilde \cO_{X,y})]^{Gal}$ is induced by the inclusion $K\subset \tilde K$.
The maps $\alpha$, $\beta$ and $\gamma$ form a morphism between this complex and the above short exact sequence. We claim that this morphism is an isomorphism.
This claim completes the proof of the theorem.
%We will prove this claim in the case
%when the torus $C$ is defined over $k$. The general case requires small extra arguments
%and we leave it to the reader.

To prove this claim it is sufficient to prove that $\gamma$ is an isomophism. Since the map
$\bT(K)\to \oplus_{y} [\bT(\tilde K)/\bT(\tilde \cO_{X,y})]^{Gal}$
is an epimorphism, hence so is the map $\gamma$.
It remains to prove that $\gamma$ is a monomorphism.
To do this it is sufficient to check that for any point $y\in X^{(1)}$ the map
$\bT(K)/\bT(\cO_{X,y})\to \bT(\tilde K)/\bT(\tilde \cO_{X,y})$
is a monomorphism. We will write $\epsilon_y$ for the latter map.
We prove below that $ker(\epsilon_y)$ is a torsion group and
the group $\bT(K)/\bT(\cO_{X,y})$ has no torsion. These two claims show that the map
$\epsilon_y$ is injective indeed.

To prove that $ker(\epsilon_y)$ is a torsion group recall that there are norm maps
$N_{\tilde \cO_{X,y}/\cO_{X,y}}: \bT(\tilde \cO_{X,y})\to \bT(\cO_{X,y})$ and
$N_{\tilde K/K}: \bT(\tilde K)\to \bT(K)$ (see Section \ref{SectNorms}). These maps induce a homomorphism
$$N_y: \bT(\tilde K)/\bT(\tilde \cO_{X,y})\to \bT(K)/\bT(\cO_{X,y})$$
such that $N_y\circ \epsilon_y$ is the multiplication by the degree $d$ of $\tilde K$ over $K$.
Thus, $ker(\epsilon_y)$ is killed by the multiplication by $d$.

Show now that the group $\bT(K)/\bT(\cO_{X,y})$ has no torsion. Take an element $a_K\in \bT(K)$ and suppose that
its class in $\bT(K)/\bT(\cO_{X,y})$ is a torsion element. Let $\tilde a_K$ be the image of $a_K$ in $\bT(\tilde K)$.
Since $\bT$ splits over $\tilde K$ we see that $\bT(\tilde K)/\bT(\tilde \cO_{X,y})$ is torsion free. Thus,
the class of $\tilde a_K$ in $\bT(\tilde K)/\bT(\tilde \cO_{X,y})$ vanishes. So, there is a unique element $\tilde a$ in $\bT(\tilde \cO_{X,y})$
whose image in $\bT(\tilde K)$ is $\tilde a_K$.
Moreover, $\tilde a$ is a $Gal$-invariant element in $\bT(\tilde \cO_{X,y})$, because $\tilde a_K$ comes from $\bT(K)$.
%Hence $\tilde a$ is in
Since $\bT(\tilde \cO_{X,y})^{Gal}=\bT(\cO_{X,y})$, there is a unique element $a\in \bT(\cO_{X,y})$
whose image in $\bT(\tilde \cO_{X,y})$ is $\tilde a$. Clearly, the image of $a$ into $\bT(K)$
is the element $a_K$. Thus, the class of $a_K$ in $\bT(K)/\bT(\cO_{X,y})$ vanishes.
So, the group $\bT(K)/\bT(\cO_{X,y})$ is torsion free.
The injectivity of $\epsilon_y$ is proved.
The surjectivity of the map $\sum r_{\mathfrak p}$ is proved.
Theorem \ref{Aus_Buksbaum_3} is proved.
\end{proof}

\end{document}